
\documentclass{amsart}
\usepackage{amsmath}
\usepackage{amssymb}
\usepackage{amsfonts}

\setcounter{MaxMatrixCols}{10}

\newtheorem{theorem}{Theorem}
\theoremstyle{plain}

\newtheorem{definition}{Definition}

\numberwithin{equation}{section}
\input{tcilatex}

\begin{document}
\title[ Some norm inequalities]{Some norm inequalities for commutators
generated by the Riesz potentials on homogeneous variable exponent
Herz-Morrey-Hardy spaces}
\author{FER\.{I}T G\"{U}RB\"{U}Z}
\address{Department of Mathematics, K\i rklareli University, K\i rklareli
39100, T\"{u}rkiye }
\email{feritgurbuz@klu.edu.tr}
\urladdr{}
\thanks{}
\curraddr{ }
\urladdr{}
\thanks{}
\date{}
\subjclass{Primary 46E35; Secondary 42B25, 42B35.}
\keywords{Riesz potential, variable exponent, commutator, homogeneous
Morrey-Herz-Hardy space, atomic decomposition}
\dedicatory{}
\thanks{}

\begin{abstract}
In harmonic analysis, the studies of inequalities of classical operators (=
singular, maximal, Riesz potentials etc.) in various function spaces have a
very important place. The maturation of many topics in the field of harmonic
analysis, as a result of various needs and developments to respond to the
problems of the time, has also led to the emergence of many studies and
works on these topics. In \cite{Gurbuz1}, under some conditions, the
boundedness of Riesz potential on homogeneous variable exponent
Herz-Morrey-Hardy spaces has been given. Inspired by the work of \cite%
{Gurbuz1}, in this work, by the atomic decompositions, we obtain the
boundedness of commutators generated by the Riesz potentials on homogeneous
variable exponent Herz-Morrey-Hardy spaces.
\end{abstract}

\maketitle

\section{Introduction}

In the last two decades it has become clear that classical function spaces
are no longer adequate to solve a number of contemporary problems that arise
naturally in various mathematical models of applied sciences. Therefore, it
has become necessary to introduce and study new nonstandard function spaces
from various points of view. Therefore, the study of variable exponent
function spaces has attracted considerable attention and has made
significant progress in various branches of mathematics, including real
analysis, partial differential equations (PDEs), and applied mathematics.
These spaces are an important generalization of classical function spaces
and were developed to study problems involving variable exponent functions.
These spaces eliminate several boundednesses of classical function spaces
and allow generalized differential and integral operators to be considered
in a broader context.

Herz spaces were initially introduced by Herz \cite{Herz} in his paper to
investigate the absolute convergence of Fourier transforms. Subsequently,
these spaces have found significant applications in diverse branches of
applied mathematics. Moreover, Herz \cite{Herz} gave a new class of space
are called the Herz spaces, which characterized certain properties of some
classical functions. Since Fefferman and Stein \cite{Fefferman and Stein}
gave a real variable characterization of Hardy space, many important
achievements have been made around Hardy space and its characterization as
the main representative.

In recent years, many Herz type spaces have emerged. One of the important
problems on Herz type spaces is boundedness of some integral operators. In
this work, only Herz-Morrey-Hardy spaces (which will be defined in the next
section) will be discussed. In this context, Herz-Morrey-Hardy spaces are
the generalized version of Herz-Hardy spaces. We will obtain some new
results in these spaces. In 2015, Xu and Yang \cite{Xu} first introduced
Herz-Morrey-Hardy spaces with variable exponents, secondly gave their atomic
decompositions. In 2016, Xu and Yang \cite{XU} gave the molecular
decomposition of variable exponent Herz-Morrey-Hardy spaces. In 2024, by
using the atomic decomposition for the domain Hardy space, G\"{u}rb\"{u}z 
\cite{Gurbuz1} showed the boundedness of Riesz potential on homogeneous
variable exponent Herz-Morrey-Hardy space. As a continuation of \cite%
{Gurbuz1}, we will discuss the commutator in this space in the third part of
this article.

\section{Basic Definitions and Previous Results}

Before giving the main result of this work, we first recall some elementary
facts, notations and necessary definitions that will be needed.

Define $\mathcal{P}\left( {\mathbb{R}^{n}}\right) $ to be the set of $%
p\left( \cdot \right) :{\mathbb{R}^{n}}\rightarrow \left[ 1,\infty \right) $
when $1\leq p_{-}:=\limfunc{essinf}\limits_{x\in {\mathbb{R}^{n}}}p\left(
x\right) $ and $p_{+}:=\limfunc{esssup}\limits_{x\in {\mathbb{R}^{n}}%
}p\left( x\right) <\infty $. The exponent $p^{\prime }\left( \cdot \right) $
means the conjugate of $p\left( \cdot \right) $, that is $\frac{1}{p\left(
x\right) }+\frac{1}{p^{\prime }\left( x\right) }=1$ holds. Let $p\left(
\cdot \right) \in \mathcal{P}\left( {\mathbb{R}^{n}}\right) $. Variable
exponent Lebesgue space $L^{p\left( \cdot \right) }\left( {\mathbb{R}^{n}}%
\right) $ is defined by%
\begin{equation*}
\left \Vert f\right \Vert _{L^{p\left( \cdot \right) }\left( {\mathbb{R}^{n}}%
\right) }:=\inf \left \{ \eta >0:\dint \limits_{{\mathbb{R}^{n}}}\left( 
\frac{\left \vert f\left( x\right) \right \vert }{\eta }\right) ^{p\left(
x\right) }dx\leq 1\right \} <\infty .
\end{equation*}

Let $f\in L_{loc}\left( {\mathbb{R}^{n}}\right) $. The Hardy-Littlewood
maximal operator $\mathcal{M}$ is defined by%
\begin{equation*}
\mathcal{M}f\left( x\right) :=\sup_{r>0}\frac{1}{r^{n}}\int \limits_{B\left(
x,r\right) }\left \vert f\left( y\right) \right \vert dy,\qquad \forall x\in 
{\mathbb{R}^{n},}
\end{equation*}%
where and follows $B(x,r)=\left \{ y\in {\mathbb{R}^{n}:}\left \vert
x-y\right \vert <r\right \} $ is the open ball centered at $x$ with radius $r
$ and $L_{loc}\left( {\mathbb{R}^{n}}\right) $ is the collection of all
locally integrable functions on ${\mathbb{R}^{n}}$. $\mathcal{B}\left( {%
\mathbb{R}^{n}}\right) $ is the set of $p\left( \cdot \right) \in \mathcal{P}%
\left( {\mathbb{R}^{n}}\right) $ satisfying the condition that $\mathcal{M}$
is bounded on $L^{p\left( \cdot \right) }\left( {\mathbb{R}^{n}}\right) $ as 
$p\left( \cdot \right) \in \mathcal{B}\left( {\mathbb{R}^{n}}\right) $ in 
\cite{Capone}. We claim that the letter $C$ stands for a positive constant
that, which may vary from line to line. The expression $f\lesssim g$ means $%
f\leqslant Cg$, and $f\thickapprox g$ means $f\lesssim g\lesssim f$.
Throughout this work, for simplicity, we denote $L^{p\left( \cdot \right)
}\left( {\mathbb{R}^{n}}\right) $ by $L^{p\left( \cdot \right) }$ and
similarly $B(x,r)$ by $B$. We will use the following results:

Let $\forall x,y\in \mathbb{%
\mathbb{R}
}^{n}$ and $C>0$. If $p\left( \cdot \right) \in \mathcal{P}\left( {\mathbb{R}%
^{n}}\right) $ satisfies the requirement given below%
\begin{equation}
\left \vert p\left( y\right) -p\left( x\right) \right \vert \leq \frac{-C}{%
\ln \left( \left \vert x-y\right \vert \right) },\qquad \text{ }\left \vert
x-y\right \vert \leq \frac{1}{2}  \label{11*}
\end{equation}%
\begin{equation}
\left \vert p\left( y\right) -p\left( x\right) \right \vert \leq \frac{C}{%
\ln \left( e+\left \vert x\right \vert \right) },\qquad \left \vert x\right
\vert \leq \left \vert y\right \vert ,  \label{12*}
\end{equation}%
then $p\left( \cdot \right) \in \mathcal{B}\left( {\mathbb{R}^{n}}\right) $
in \cite{Izuki1}.

For $p(\cdot )\in \mathcal{P}(\mathbb{R}^{n})$, $f\in L^{p\left( \cdot
\right) }\left( {\mathbb{R}^{n}}\right) $ and $g\in L^{p^{\prime }\left(
\cdot \right) }\left( {\mathbb{R}^{n}}\right) $, H\"{o}lder's inequality on
variable exponent Lebesgue spaces holds in the form 
\begin{equation}
\left \vert \dint \limits_{%
\mathbb{R}
^{n}}f\left( x\right) g\left( x\right) dx\right \vert \leq \dint \limits_{%
\mathbb{R}
^{n}}\left \vert f\left( x\right) g\left( x\right) \right \vert dx\leq
r_{p}\left \Vert f\right \Vert _{L^{p\left( \cdot \right) }}\left \Vert
g\right \Vert _{L^{p^{\prime }\left( \cdot \right) }},\text{ }r_{p}=1+\frac{1%
}{p_{-}}-\frac{1}{p_{+}},  \label{3}
\end{equation}%
see Theorem 2.1 in \cite{Kovacik}.

For $0<\beta <n$, the Riesz potential 
\begin{equation*}
I^{\beta }f(x)=\int \limits_{%
\mathbb{R}
^{n}}\frac{f(y)}{|x-y|^{n-\beta }}dy
\end{equation*}%
has many applications in harmonic analysis. For example, suppose $p(\cdot
)\in \mathcal{P}(\mathbb{R}^{n})$ satisfies (\ref{11*}) and (\ref{12*}).
Then, 
\begin{equation*}
\left \Vert I^{\beta }f\right \Vert _{L^{p_{2}\left( \cdot \right)
}}\lesssim \left \Vert f\right \Vert _{L^{p_{1}\left( \cdot \right) }}
\end{equation*}%
is valid such that $0<\beta <\frac{n}{p_{+}}$ and $\frac{1}{p_{2}\left(
\cdot \right) }=\frac{1}{p_{1}\left( \cdot \right) }-\frac{\beta }{n}$ (see 
\cite{Capone}). In 2024, G\"{u}rb\"{u}z \cite{Gurbuz1} obtained the
boundedness of the operator $I^{\beta }$ on homogeneous variable exponent
Herz-Morrey-Hardy spaces under some conditions. In addition, with \textbf{\ }%
$b\in L_{loc}\left( {\mathbb{R}^{n}}\right) $ ve $0<\beta <n$, in this work,
we can mainly focus on the the commutator generated by $I^{\beta }$ and $b$
is defined as follows:

\begin{eqnarray}
\left[ b,I^{\beta }\right] f\left( x\right) &=&b\left( x\right) I^{\beta
}f\left( x\right) -I^{\beta }\left( bf\right) \left( x\right)  \notag \\
&=&\int \limits_{\mathbb{R}^{n}}\frac{\left[ b\left( x\right) -b\left(
y\right) \right] }{\left \vert x-y\right \vert ^{n-\beta }}f\left( y\right)
dy.  \label{e1}
\end{eqnarray}%
A locally integrable function $b$ is said to belong to bounded mean
oscillation space $\left( BMO(\mathbb{R}^{n})\right) $, if it satisfies

\begin{equation*}
\Vert b\Vert _{BMO}:=\sup_{B}\frac{1}{|B|}\int
\limits_{B}|b(y)-b_{B}|\,dy<\infty ,
\end{equation*}%
where $B$ denotes the ball centered at $x\in \mathbb{R}^{n}$ and radius of $%
r>0$, and $b_{B}$ denotes the average of $b$ on $B$, that is, $%
b_{B}:=|B|^{-1}\int \limits_{B}b(t)dt$.

{Let{\ $b\in $ }}$BMO(\mathbb{R}^{n})${, }$p(\cdot )\in \mathcal{P}(\mathbb{R%
}^{n})$ {{and for $j,i\in \mathbb{Z}$ with $j>i$, we have the following
inequalities: 
\begin{equation*}
C^{-1}\Vert b\Vert _{BMO}\leq \sup_{B:Ball}\frac{1}{\Vert \chi _{B}\Vert
_{L^{p(\cdot )}}}\Vert (b-b_{B})\chi _{B}\Vert _{L^{p(\cdot )}}\leq C\Vert
b\Vert _{BMO}
\end{equation*}%
\begin{equation}
\Vert (b-b_{B_{i}})\chi _{B_{j}}\Vert _{L^{p(\cdot )}}\leq C(j-i)\Vert
b\Vert _{BMO}\Vert \chi _{B_{j}}\Vert _{L^{p(\cdot )}}  \label{l3}
\end{equation}%
(see \cite{Izuki1}).}}

Assume that $p(\cdot )\in \mathcal{P}(\mathbb{R}^{n})$ satisfies (\ref{11*})
and (\ref{12*}), $0<\beta <\frac{n}{p_{+}}$, and define $p_{2}\left( \cdot
\right) $ as $\frac{1}{p_{2}\left( \cdot \right) }=\frac{1}{p_{1}\left(
\cdot \right) }-\frac{\beta }{n}$. Then, 
\begin{equation}
\left \Vert \left[ b,I^{\beta }\right] f\right \Vert _{L^{p_{2}\left( \cdot
\right) }}\lesssim \Vert b\Vert _{BMO}\left \Vert f\right \Vert
_{L^{p_{1}\left( \cdot \right) }}  \label{14}
\end{equation}%
is satisfied (see \cite{Izuki1}).

When $p\left( \cdot \right) \in \mathcal{B}\left( {\mathbb{R}^{n}}\right) $,
then%
\begin{equation}
\frac{\left \Vert \chi _{S}\right \Vert _{L^{p\left( \cdot \right) }}}{\left
\Vert \chi _{B}\right \Vert _{L^{p\left( \cdot \right) }}}\leq C\left( \frac{%
\left \vert S\right \vert }{\left \vert B\right \vert }\right) ^{\delta
_{1}},\frac{\left \Vert \chi _{S}\right \Vert _{L^{p^{\prime }\left( \cdot
\right) }\left( 
\mathbb{R}
^{n}\right) }}{\left \Vert \chi _{B}\right \Vert _{L^{p^{\prime }\left(
\cdot \right) }\left( 
\mathbb{R}
^{n}\right) }}\leq C\left( \frac{\left \vert S\right \vert }{\left \vert
B\right \vert }\right) ^{\delta _{2}},S\subset B,  \label{1}
\end{equation}%
and{\large {\ }}%
\begin{equation}
\Vert \chi _{B}\Vert _{L^{p(\cdot )}}\Vert \chi _{B}\Vert _{L^{p^{\prime
}(\cdot )}}\leq C\left \vert B\right \vert ,  \label{5}
\end{equation}%
were proved in \cite{Izuki}, respectively.

By (\ref{1}) and (\ref{5}), we can conclude that 
\begin{equation}
\Vert \chi _{i}\Vert _{L^{q(\cdot )}}\leq \Vert \chi _{B_{i}}\Vert
_{L^{q(\cdot )}}.  \label{6}
\end{equation}

Let $l\in \mathbb{%
\mathbb{Z}
}$, $B_{l}:=\{x\in \mathbb{R}^{n}:\left \vert x\right \vert \leq 2^{l}\}$, $%
D_{l}:=B_{l}\setminus B_{l-1}$, $\chi _{l}:=\chi _{D_{l}}$. Assume that the
symbol $%
\mathbb{N}
_{0}$ denotes the set of all nonnegative integers. For any $m\in 
\mathbb{N}
_{0}=%
\mathbb{N}
\cup \left \{ 0\right \} $, we define

\begin{equation*}
\tilde{\chi}_{m}:=\left \{ 
\begin{array}{ccc}
\chi _{D_{m}} & , & m\geq 1 \\ 
\chi _{B_{0}} & , & m=0%
\end{array}%
\right. \cdot
\end{equation*}

Now let's give the definition of the homogeneous variable exponent
Herz-Morrey spaces.

\begin{definition}
Let $0<q\leq \infty $, $p\left( \cdot \right) \in \mathcal{P}\left( {\mathbb{%
R}^{n}}\right) $ and $0\leq \lambda <\infty $. Let $\alpha \left( \cdot
\right) $ also be a bounded real-valued measurable function on $\mathbb{R}%
^{n}\left( \text{that is, }\alpha \left( \cdot \right) \in L^{\infty }\left( 
\mathbb{R}^{n}\right) \right) $. Then, the homogeneous variable exponent
Herz-Morrey space $M\dot{K}_{p(\cdot ),\lambda }^{\alpha \left( \cdot
\right) ,q}(\mathbb{R}^{n})$ is defined by%
\begin{equation*}
M\dot{K}_{p(\cdot ),\lambda }^{\alpha \left( \cdot \right) ,q}(\mathbb{R}%
^{n}):=\left \{ f\in L_{loc}^{p(\cdot )}\left( \mathbb{R}^{n}\setminus \{0\}
\right) :\Vert f\Vert _{M\dot{K}_{p(\cdot ),\lambda }^{\alpha \left( \cdot
\right) ,q}(\mathbb{R}^{n})}<\infty \right \} ,
\end{equation*}%
where%
\begin{equation*}
\Vert f\Vert _{M\dot{K}_{p(\cdot ),\lambda }^{\alpha \left( \cdot \right)
,q}(\mathbb{R}^{n})}:=\sup_{L\in \mathbb{%
\mathbb{Z}
}}2^{-L\lambda }\left( \sum \limits_{l=-\infty }^{L}\Vert 2^{l\alpha \left(
\cdot \right) }f\chi _{l}\Vert _{L^{p(\cdot )}}^{q}\right) ^{\frac{1}{q}}
\end{equation*}%
and when $q=\infty $, there is the usual modification.
\end{definition}

Now before giving the definition of homogeneous variable exponent
Herz-Morrey-Hardy space $HM\dot{K}_{p(\cdot ),\lambda }^{\alpha \left( \cdot
\right) ,q}(\mathbb{R}^{n})$, we need to recall some notations.

We assume that $\mathcal{S}\left( 
\mathbb{R}
^{n}\right) $ be the Schwartz space of all rapidly decreasing infinitely
differentiable functions on $%
\mathbb{R}
^{n}$, and $\mathcal{S}^{\prime }\left( 
\mathbb{R}
^{n}\right) $ be the dual space of $\mathcal{S}\left( 
\mathbb{R}
^{n}\right) $. Assume $G_{N}f$ is the grand maximal function of $f$ defined
by%
\begin{equation*}
G_{N}f\left( x\right) :=\sup_{\phi \in \mathcal{A}_{N}}\left \vert \phi
_{\nabla }^{\ast }\left( f\right) \left( x\right) \right \vert ,\qquad x\in 
\mathbb{R}
^{n},
\end{equation*}%
where $\mathcal{A}_{N}$ and $\phi _{\nabla }^{\ast }$ are defined in \cite%
{Xu}, respectively.

In 2015, Xu and Yang \cite{Xu} introduced the homogeneous variable exponent
Herz-Morrey-Hardy spaces $HM\dot{K}_{p(\cdot ),\lambda }^{\alpha \left(
\cdot \right) ,q}(\mathbb{R}^{n})$ as follows:

\begin{definition}
Let $\alpha \left( \cdot \right) \in L^{\infty }\left( \mathbb{R}^{n}\right) 
$, $0<q\leq \infty $, $p\left( \cdot \right) \in \mathcal{P}\left( {\mathbb{R%
}^{n}}\right) $, $0\leq \lambda <\infty $ and $N>n+1$. Then, the homogeneous
variable exponent Herz-Morrey-Hardy space $HM\dot{K}_{p(\cdot ),\lambda
}^{\alpha \left( \cdot \right) ,q}(\mathbb{R}^{n})$ is defined by%
\begin{equation*}
HM\dot{K}_{p(\cdot ),\lambda }^{\alpha \left( \cdot \right) ,q}(\mathbb{R}%
^{n}):=\left \{ 
\begin{array}{c}
f\in \mathcal{S}^{\prime }\left( 
\mathbb{R}
^{n}\right) :\Vert f\Vert _{HM\dot{K}_{p(\cdot ),\lambda }^{\alpha \left(
\cdot \right) ,q}(\mathbb{R}^{n})}:= \\ 
\Vert G_{N}f\Vert _{M\dot{K}_{p(\cdot ),\lambda }^{\alpha \left( \cdot
\right) ,q}(\mathbb{R}^{n})}<\infty%
\end{array}%
\right \} .
\end{equation*}
\end{definition}

We can say that $G_{N}f\left( x\right) \leq C\mathcal{M}f\left( x\right)
\left( C>0\right) $ for $x\in 
\mathbb{R}
^{n}$ (see Proposition in Page 57 in \cite{Stein}).

Now, we give the notion of atoms.

\begin{definition}
\label{Definition}Let $p\left( \cdot \right) \in \mathcal{P}\left( \mathbb{R}%
^{n}\right) $, $\alpha \left( \cdot \right) \in L^{\infty }\left( \mathbb{R}%
^{n}\right) \cap \mathcal{P}_{0}^{\log }\left( 
\mathbb{R}
^{n}\right) \cap \mathcal{P}_{\infty }^{\log }\left( 
\mathbb{R}
^{n}\right) $, and nonnegative integer $s\geqslant \left[ \alpha
_{r}-n\delta _{2}\right] $; here $\alpha _{r}=\alpha (0)$, if $r<1$, and $%
\alpha _{r}=\alpha _{\infty }$, if $r\geqslant 1$, $n\delta _{2}\leq \alpha
_{r}<\infty $ and $\delta _{2}$ as in (\ref{1}).

$(i)$ If a function $a$ on $\mathbb{R}^{n}$satisfies%
\begin{eqnarray*}
\left( 1\right) \text{ supp }a &\subset &B\left( 0,2^{r}\right) \\
\left( 2\right) \text{ }\left \Vert a\right \Vert _{L^{p\left( \cdot \right)
}} &\leq &\left \vert B\left( 0,2^{r}\right) \right \vert ^{-\frac{\alpha
_{r}}{n}} \\
\left( 3\right) \text{ }\dint \limits_{%
\mathbb{R}
^{n}}a\left( x\right) x^{\beta }dx &=&0,\text{ }\left \vert \beta \right
\vert \leq s,
\end{eqnarray*}%
then it is called a central $(\alpha (\cdot ),p(\cdot ))$-atom.

$(ii)$ If a function $a$ on $\mathbb{R}^{n}$satisfies $\left( 2\right) $ and 
$\left( 3\right) $ above and condition given below:%
\begin{equation*}
\left( 1\right) \text{ supp }a\subset B\left( 0,2^{r}\right) ,\text{ }r\geq
1,
\end{equation*}%
then it is called a central $(\alpha (\cdot ),p(\cdot ))$-atom.of restricted
type, where follows $\mathcal{P}_{0}^{\log }\left( 
\mathbb{R}
^{n}\right) $ and $\mathcal{P}_{\infty }^{\log }\left( 
\mathbb{R}
^{n}\right) $ are the set of log-H\"{o}lder continuous functions at origin
and the set of log-H\"{o}lder continuous functions at infinity,
respectively; for their definitions see \cite{Gurbuz1}.
\end{definition}

\section{Main Result}

The following theorem is our main result.

\begin{theorem}
Let $0<q_{1}\leq q_{2}<\infty $, $0\leq \lambda <\infty $, $0<\beta <n$ and $%
\alpha \left( \cdot \right) \in L^{\infty }\left( \mathbb{R}^{n}\right) \cap 
\mathcal{P}_{0}^{\log }\left( 
\mathbb{R}
^{n}\right) \cap \mathcal{P}_{\infty }^{\log }\left( 
\mathbb{R}
^{n}\right) $ such that $2\lambda \leq \alpha \left( \cdot \right) $, $\beta
-n\delta _{2}<\alpha \left( 0\right) $, $\beta -n\delta _{2}<\alpha _{\infty
}<\infty $ with $\delta _{1}$, $\delta _{2}\in \left( 0,1\right) $
satisfying (\ref{1}). Let also $\left[ b,I^{\beta }\right] $ be defined as
in (\ref{e1}). Assume that $p(\cdot ),p_{1}\left( \cdot \right) ,p_{2}\left(
\cdot \right) \in \mathcal{P}(\mathbb{R}^{n})$ satisfy (\ref{11*}) and (\ref%
{12*}) and define $p_{1}\left( \cdot \right) $ by $\frac{1}{p_{2}\left(
\cdot \right) }=\frac{\beta }{n}-\frac{1}{p_{1}\left( \cdot \right) }$ for $%
1<p_{1}\left( \cdot \right) <p_{2}\left( \cdot \right) <\infty $. \i f {$%
b\in $}$\Vert b\Vert _{BMO}$, then the following inequality holds:%
\begin{equation}
\left \Vert \left[ b,I^{\beta }\right] \left( f\right) \right \Vert _{M\dot{K%
}_{p_{2}(\cdot ),\lambda }^{\alpha \left( \cdot \right) ,q_{2}}(\mathbb{R}%
^{n})}\lesssim \Vert b\Vert _{BMO}\left \Vert f\right \Vert _{HM\dot{K}%
_{p_{1}(\cdot ),\lambda }^{\alpha \left( \cdot \right) ,q_{1}}(\mathbb{R}%
^{n})}  \label{100}
\end{equation}%
for all $f\in HM\dot{K}_{p_{1}(\cdot ),\lambda }^{\alpha \left( \cdot
\right) ,q_{1}}(\mathbb{R}^{n})$.
\end{theorem}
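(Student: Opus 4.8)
The plan is to route everything through the atomic decomposition of the Hardy-type domain space and then estimate the Herz--Morrey quasi-norm of the image annulus by annulus. First I would invoke the atomic decomposition of $f\in HM\dot{K}_{p_{1}(\cdot ),\lambda }^{\alpha (\cdot ),q_{1}}(\mathbb{R}^{n})$ due to Xu and Yang \cite{Xu}, writing $f=\sum_{k=-\infty }^{\infty }\lambda _{k}a_{k}$, where each $a_{k}$ is a central $(\alpha (\cdot ),p_{1}(\cdot ))$-atom supported on $B_{k}=B(0,2^{k})$ and the coefficients satisfy $\sup_{L}2^{-L\lambda }\big(\sum_{k\leq L}|\lambda _{k}|^{q_{1}}\big)^{1/q_{1}}\lesssim \Vert f\Vert _{HM\dot{K}_{p_{1}(\cdot ),\lambda }^{\alpha (\cdot ),q_{1}}}$. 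By linearity $[b,I^{\beta }]f=\sum_{k}\lambda _{k}[b,I^{\beta }]a_{k}$, so by the definition of $M\dot{K}_{p_{2}(\cdot ),\lambda }^{\alpha (\cdot ),q_{2}}$ it suffices to bound, uniformly in $L\in \mathbb{Z}$, the quantity
\[
2^{-L\lambda }\Big( \sum_{l=-\infty }^{L}\Big\Vert 2^{l\alpha (\cdot )}\sum_{k}\lambda _{k}\,[b,I^{\beta }]a_{k}\,\chi _{l}\Big\Vert _{L^{p_{2}(\cdot )}}^{q_{2}}\Big) ^{1/q_{2}}.
\]
For each fixed annulus $D_{l}$ I would split the inner sum into a local part $k\geq l-1$ and a far part $k\leq l-2$, and estimate $\Vert 2^{l\alpha (\cdot )}[b,I^{\beta }]a_{k}\chi _{l}\Vert _{L^{p_{2}(\cdot )}}$ in each regime.

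For the local part the atom's support is comparable to or larger than $D_{l}$, so I would use the $L^{p_{1}(\cdot )}\to L^{p_{2}(\cdot )}$ boundedness of the commutator, inequality (\ref{14}), together with the size condition $(2)$ of Definition \ref{Definition}, to obtain $\Vert \lbrack b,I^{\beta }]a_{k}\Vert _{L^{p_{2}(\cdot )}}\lesssim \Vert b\Vert _{BMO}\,\Vert a_{k}\Vert _{L^{p_{1}(\cdot )}}\lesssim \Vert b\Vert _{BMO}\,|B_{k}|^{-\alpha _{r_{k}}/n}$. Multiplying by $2^{l\alpha (\cdot )}\chi _{l}$, taking the $L^{p_{2}(\cdot )}$ norm, and converting the characteristic-function norms through (\ref{1}), (\ref{5}) and (\ref{6}) then yields a bound of the form $\Vert b\Vert _{BMO}\,2^{(l-k)\theta }$ for a suitable $\theta$ depending on $\alpha $, $\delta _{2}$ and $\beta $, with the two cases $k<0$ and $k\geq 0$ treated by the corresponding values $\alpha (0)$ and $\alpha _{\infty }$.

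For the far part, since $\operatorname{supp}a_{k}\subset B_{k}$ is much smaller than $D_{l}$, one has $|x-y|\approx 2^{l}$ for $x\in D_{l}$, $y\in B_{k}$. Writing $b(x)-b(y)=(b(x)-b_{B_{k}})-(b(y)-b_{B_{k}})$ in (\ref{e1}) splits $[b,I^{\beta }]a_{k}$ into two pieces. In each piece I would exploit the vanishing moments $(3)$ of the atom to replace the kernel $|x-y|^{-(n-\beta )}$ by its Taylor remainder of order $s$ about the origin, producing the pointwise gain $|x-y|^{-(n-\beta )}\rightsquigarrow 2^{k(s+1)}2^{-l(n-\beta +s+1)}$. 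Estimating $\int_{B_{k}}|a_{k}|\lesssim |B_{k}|^{-\alpha _{r_{k}}/n}\Vert \chi _{B_{k}}\Vert _{L^{p_{1}^{\prime }(\cdot )}}$ by Hölder's inequality (\ref{3}) and the size condition, and controlling the factor $(b(\cdot )-b_{B_{k}})\chi _{l}$ by (\ref{l3})—which costs only an extra polynomial factor $(l-k)\Vert b\Vert _{BMO}$—I obtain a bound $\lesssim (l-k)\Vert b\Vert _{BMO}\,2^{(k-l)\eta }$ for the far terms, with $\eta >0$ governed by $s$, $\beta $, $\delta _{2}$ and $\alpha _{r_{k}}$.

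Finally I would reassemble the double sum over $l$ and $k$. The geometric factors $2^{(k-l)\eta }$ (far part) and $2^{(l-k)\theta }$ (local part) must dominate, respectively, the polynomial growth $(l-k)$ from the BMO estimate and the weight $2^{l\alpha }$; the hypotheses $s\geq \lbrack \alpha _{r}-n\delta _{2}]$, $\beta -n\delta _{2}<\alpha (0)$ and $\beta -n\delta _{2}<\alpha _{\infty }$ are precisely what force the relevant exponents to be positive in both the region near the origin and near infinity, so the $(l-k)$ factor is absorbed and the series converge. The passage from the $\ell ^{q_{1}}$ control of $\{\lambda _{k}\}$ to the desired $\ell ^{q_{2}}$ bound uses $q_{1}\leq q_{2}$ together with Hölder's inequality in the summation index, and the Morrey weight $2^{-L\lambda }$ is handled using $2\lambda \leq \alpha (\cdot )$. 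The main obstacle I anticipate is the far part: making the iterated geometric sum converge while carrying the extra $(l-k)$ BMO factor and simultaneously matching the two decay regimes dictated by $\alpha (0)$ and $\alpha _{\infty }$; this is where the precise relations among $\beta $, $\delta _{2}$, $\alpha $ and $s$, and the bookkeeping between the $q_{1}$- and $q_{2}$-summations and the supremum over $L$, are most delicate.
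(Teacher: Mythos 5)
Your overall architecture --- atomic decomposition of the domain space, annulus-by-annulus estimation of the Herz--Morrey norm, a local/far split in the atom index, and the $L^{p_{1}(\cdot )}\to L^{p_{2}(\cdot )}$ boundedness (\ref{14}) plus the size condition $(2)$ of Definition \ref{Definition} for the local part --- is the same as the paper's. The gap is in your far part. You write $b(x)-b(y)=(b(x)-b_{B_{k}})-(b(y)-b_{B_{k}})$ and then propose, \emph{in each piece}, to exploit the vanishing moments $(3)$ of the atom via a Taylor remainder of the kernel. That works only for the first piece, $(b(x)-b_{B_{k}})\int |x-y|^{-(n-\beta )}a_{k}(y)\,dy$, where the moments of $a_{k}$ can be paired against the kernel. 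The second piece is $I^{\beta }\bigl((b-b_{B_{k}})a_{k}\bigr)(x)$, and the function $(b-b_{B_{k}})a_{k}$ does \emph{not} inherit the vanishing moments of $a_{k}$; this loss of cancellation is precisely the reason commutators $[b,T]$ fail to map $H^{1}$ into $L^{1}$. So the claimed gain $2^{k(s+1)}2^{-l(n-\beta +s+1)}$ is unavailable for that piece, and the step as written fails. Relatedly, your assertion that $s\geq [\alpha _{r}-n\delta _{2}]$ is ``precisely what forces the relevant exponents to be positive'' misattributes the source of convergence: in this theorem the moment order $s$ plays no role in the far-part estimate.

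The repair --- and what the paper actually does --- is to forgo the moment condition entirely in the far regime and use only the crude size bound $|x-y|^{-(n-\beta )}\approx 2^{-l(n-\beta )}$ (in your indexing, for $y\in B_{k}$, $x\in D_{l}$, $k\leq l-2$), together with H\"{o}lder's inequality (\ref{3}), the BMO estimates (\ref{l3}) (which contribute the polynomial factor $(l-k)\Vert b\Vert _{BMO}$), and the characteristic-function norm comparisons (\ref{1}) and (\ref{5}). This yields the key estimate, the paper's (\ref{12}), of the form $\Vert [b,I^{\beta }]a_{k}\,\chi _{l}\Vert _{L^{p_{2}(\cdot )}}\lesssim \Vert b\Vert _{BMO}\,(l-k)\,2^{(\beta -n\delta _{2})(l-k)-k\alpha _{r_{k}}}$, and the geometric convergence of the resulting double sums comes from the standing hypotheses $\beta -n\delta _{2}<\alpha (0)$ and $\beta -n\delta _{2}<\alpha _{\infty }$, which absorb the $(l-k)$ factor. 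With that correction the rest of your bookkeeping --- the two regimes governed by $\alpha (0)$ and $\alpha _{\infty }$, the embedding $\ell ^{q_{1}}\hookrightarrow \ell ^{q_{2}}$ for $q_{1}\leq q_{2}$, the case split $0<q_{1}\leq 1$ (subadditivity of $t\mapsto t^{q_{1}}$) versus $1<q_{1}<\infty $ (H\"{o}lder in the summation index), and the condition $\alpha _{\infty }>2\lambda $ for the Morrey weight --- proceeds as in the paper.
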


\begin{proof}
Let $f\in HM\dot{K}_{p_{1}(\cdot ),\lambda }^{\alpha \left( \cdot \right)
,q_{1}}(\mathbb{R}^{n})$. By Theorem 2 in \cite{Gurbuz1}, take $a_{l}=a\cdot
\chi _{l}=a\cdot \chi _{B_{l}}$ for each $l\in 
\mathbb{Z}
$, we have 
\begin{equation*}
f=\dsum \limits_{l=0}^{\infty }\lambda _{l}a=\dsum \limits_{l=-\infty
}^{\infty }\lambda _{l}a_{l},
\end{equation*}%
where each $a_{l}$ is a central $(\alpha (\cdot ),q_{1}(\cdot ))$-atom with
support contained in $B_{l}$, and%
\begin{equation*}
\Vert f\Vert _{HM\dot{K}_{p_{1}(\cdot ),\lambda }^{\alpha \left( \cdot
\right) ,q_{1}}(\mathbb{R}^{n})}\approx \inf \sup \limits_{L\in 
\mathbb{Z}
}2^{-L\lambda }\left( \sum \limits_{l=-\infty }^{L}\left \vert \lambda
_{l}\right \vert ^{q_{1}}\right) ^{\frac{1}{q_{1}}}.
\end{equation*}

For convenience, we show 
\begin{equation*}
\Psi =\sup \limits_{L\in 
\mathbb{Z}
}2^{-L\lambda q_{1}}\sum \limits_{l=-\infty }^{L}\left \vert \lambda
_{l}\right \vert ^{q_{1}}.
\end{equation*}%
By (2.8) in \cite{Gurbuz1}, we have 
\begin{eqnarray*}
\left \Vert \left[ b,I^{\beta }\right] \left( f\right) \right \Vert _{M\dot{K%
}_{p_{2}(\cdot ),\lambda }^{\alpha \left( \cdot \right) ,q_{2}}(\mathbb{R}%
^{n})}^{q_{1}} &\approx &\max \left \{ 
\begin{array}{c}
\sup \limits_{L\leq 0,L\in 
\mathbb{Z}
}2^{-L\lambda q_{1}}\left( \sum \limits_{k=-\infty }^{L}2^{kq_{1}\alpha
\left( 0\right) }\left \Vert \left[ b,I^{\beta }\right] \left( f\right) \chi
_{k}\right \Vert _{L^{p_{2}(\cdot )}}^{q_{1}}\right) , \\ 
\sup \limits_{L>0,L\in 
\mathbb{Z}
}\left[ 
\begin{array}{c}
2^{-L\lambda q_{1}}\left( \sum \limits_{k=-\infty }^{-1}2^{kq_{1}\alpha
\left( 0\right) }\left \Vert \left[ b,I^{\beta }\right] \left( f\right) \chi
_{k}\right \Vert _{L^{p_{2}(\cdot )}}^{q_{1}}\right) \\ 
+2^{-L\lambda q_{1}}\left( \sum \limits_{k=0}^{L}2^{kq_{1}\alpha _{\infty
}}\left \Vert \left[ b,I^{\beta }\right] \left( f\right) \chi _{k}\right
\Vert _{L^{p_{2}(\cdot )}}^{q_{1}}\right)%
\end{array}%
\right]%
\end{array}%
\right \} \\
&\lesssim &\left \{ X,Y+Z\right \} ,
\end{eqnarray*}%
where%
\begin{eqnarray*}
X &:&=\sup \limits_{L\leq 0,L\in 
\mathbb{Z}
}2^{-L\lambda q_{1}}\left( \sum \limits_{k=-\infty }^{L}2^{kq_{1}\alpha
\left( 0\right) }\left \Vert \left[ b,I^{\beta }\right] \left( f\right) \chi
_{k}\right \Vert _{L^{p_{2}(\cdot )}}^{q_{1}}\right) \\
Y &:&=\sum \limits_{k=-\infty }^{-1}2^{kq_{1}\alpha \left( 0\right) }\left
\Vert \left[ b,I^{\beta }\right] \left( f\right) \chi _{k}\right \Vert
_{L^{p_{2}(\cdot )}}^{q_{1}} \\
Z &:&=\sup \limits_{L>0,L\in 
\mathbb{Z}
}2^{-L\lambda q_{1}}\left( \sum \limits_{k=0}^{L}2^{kq_{1}\alpha _{\infty
}}\left \Vert \left[ b,I^{\beta }\right] \left( f\right) \chi _{k}\right
\Vert _{L^{p_{2}(\cdot )}}^{q_{1}}\right) .
\end{eqnarray*}%
To complete the proof of main result, we only need prove that $X,Y,Z\lesssim
\Psi $. To do this we estimate $X,Y,Z$ step by step. Indeed, we have%
\begin{eqnarray*}
X &=&\sup \limits_{L\leq 0,L\in 
\mathbb{Z}
}2^{-L\lambda q_{1}}\left( \sum \limits_{k=-\infty }^{L}2^{kq_{1}\alpha
\left( 0\right) }\left \Vert \left[ b,I^{\beta }\right] \left( f\right) \chi
_{k}\right \Vert _{L^{p_{2}(\cdot )}}^{q_{1}}\right) \\
&\leq &\sup \limits_{L\leq 0,L\in 
\mathbb{Z}
}2^{-L\lambda q_{1}}\sum \limits_{k=-\infty }^{L}2^{kq_{1}\alpha \left(
0\right) }\left( \sum \limits_{l=k}^{\infty }\left \vert \lambda _{l}\right
\vert \left \Vert \left( \left[ b,I^{\beta }\right] \left( a_{l}\right)
\right) \chi _{k}\right \Vert _{L^{p_{2}(\cdot )}}\right) ^{q_{1}} \\
&&+\sup \limits_{L\leq 0,L\in 
\mathbb{Z}
}2^{-L\lambda q_{1}}\sum \limits_{k=-\infty }^{L}2^{kq_{1}\alpha \left(
0\right) }\left( \sum \limits_{l=-\infty }^{k-1}\left \vert \lambda
_{l}\right \vert \left \Vert \left( \left[ b,I^{\beta }\right] \left(
a_{l}\right) \right) \chi _{k}\right \Vert _{L^{p_{2}(\cdot )}}\right)
^{q_{1}} \\
&:&=X_{1}+X_{2}.
\end{eqnarray*}%
\begin{eqnarray*}
Y &=&\sum \limits_{k=-\infty }^{-1}2^{kq_{1}\alpha \left( 0\right) }\left
\Vert \left[ b,I^{\beta }\right] \left( f\right) \chi _{k}\right \Vert
_{L^{p_{2}(\cdot )}}^{q_{1}} \\
&\leq &\sum \limits_{k=-\infty }^{-1}2^{kq_{1}\alpha \left( 0\right) }\left(
\sum \limits_{l=k}^{\infty }\left \vert \lambda _{l}\right \vert \left \Vert
\left( \left[ b,I^{\beta }\right] \left( a_{l}\right) \right) \chi
_{k}\right \Vert _{L^{p_{2}(\cdot )}}\right) ^{q_{1}} \\
&&+\sum \limits_{k=-\infty }^{-1}2^{kq_{1}\alpha \left( 0\right) }\left(
\sum \limits_{l=-\infty }^{k-1}\left \vert \lambda _{l}\right \vert \left
\Vert \left( \left[ b,I^{\beta }\right] \left( a_{l}\right) \right) \chi
_{k}\right \Vert _{L^{p_{2}(\cdot )}}\right) ^{q_{1}} \\
&:&=Y_{1}+Y_{2}.
\end{eqnarray*}%
\begin{eqnarray*}
Z &=&\sup \limits_{L>0,L\in 
\mathbb{Z}
}2^{-L\lambda q_{1}}\left( \sum \limits_{k=0}^{L}2^{kq_{1}\alpha _{\infty
}}\left \Vert \left[ b,I^{\beta }\right] \left( f\right) \chi _{k}\right
\Vert _{L^{p_{2}(\cdot )}}^{q_{1}}\right) \\
&\lesssim &\sup \limits_{L>0,L\in 
\mathbb{Z}
}2^{-L\lambda q_{1}}\sum \limits_{k=0}^{L}2^{kq_{1}\alpha _{\infty }}\left(
\sum \limits_{l=k}^{\infty }\left \vert \lambda _{l}\right \vert \left \Vert
\left( \left[ b,I^{\beta }\right] \left( a_{l}\right) \right) \chi
_{k}\right \Vert _{L^{p_{2}(\cdot )}}\right) ^{q_{1}} \\
&&+\sup \limits_{L>0,L\in 
\mathbb{Z}
}2^{-L\lambda q_{1}}\sum \limits_{k=0}^{L}2^{kq_{1}\alpha _{\infty }}\left(
\sum \limits_{l=-\infty }^{k-1}\left \vert \lambda _{l}\right \vert \left
\Vert \left( \left[ b,I^{\beta }\right] \left( a_{l}\right) \right) \chi
_{k}\right \Vert _{L^{p_{2}(\cdot )}}\right) ^{q_{1}} \\
&:&=Z_{1}+Z_{2}.
\end{eqnarray*}%
To proceed, we need a pointwise estimate for $\left[ b,I^{\beta }\right] $
on $F_{l}$, where by (\ref{3}), for any $l,k\in 
\mathbb{Z}
$, with $l\leq k-1$ and $y\in F_{l}$, then $\left \vert x-y\right \vert \sim
\left \vert x\right \vert $, $2\left \vert y\right \vert \leq \left \vert
x\right \vert $, we obtain 
\begin{eqnarray}
&&\left \vert \left[ b,I^{\beta }\right] \left( a_{j}\right) \chi _{k}\left(
x\right) \right \vert  \notag \\
&\leq &\int \limits_{F_{l}}\frac{\left \vert b\left( x\right) -b\left(
y\right) \right \vert }{|x-y|^{n-\beta }}\left \vert a_{l}\left( y\right)
\right \vert dy\cdot \chi _{k}\left( x\right)  \notag \\
&\lesssim &2^{k\left( \beta -n\right) }\left( \left \vert b\left( x\right)
-b_{B_{l}}\right \vert \int \limits_{F_{l}}\left \vert a_{l}\left( y\right)
\right \vert dy+\int \limits_{F_{l}}\left \vert b_{B_{l}}-b\left( y\right)
\right \vert \left \vert a_{l}\left( y\right) \right \vert dy\right) \cdot
\chi _{k}\left( x\right)  \notag \\
&\lesssim &2^{k\left( \beta -n\right) }\left \Vert a_{l}\right \Vert
_{L^{p_{1}(\cdot )}}  \notag \\
&&\times \left( \left \vert b\left( x\right) -b_{B_{l}}\right \vert \left
\Vert \chi _{l}\right \Vert _{L^{p_{1}^{\prime }(\cdot )}}+\left \Vert
\left( b-b_{B_{j}}\right) \chi _{l}\right \Vert _{L^{p_{1}^{\prime }(\cdot
)}}\right) \cdot \chi _{k}\left( x\right) .  \label{4}
\end{eqnarray}%
So applying (\ref{3}), (\ref{l3}), (\ref{14}), (\ref{1}), (\ref{6}) and (\ref%
{4}), we have%
\begin{eqnarray*}
\left \Vert \left[ b,I^{\beta }\right] \left( a_{l}\right) \chi _{k}\right
\Vert _{L^{p_{2}(\cdot )}} &\lesssim &2^{k\left( \beta -n\right) }\left
\Vert a_{l}\right \Vert _{L^{p_{1}(\cdot )}} \\
&&\times \left( \left \Vert \left( b-b_{B_{l}}\right) \chi _{k}\right \Vert
_{L^{p_{2}(\cdot )}}\left \Vert \chi _{l}\right \Vert _{L^{p_{1}^{\prime
}(\cdot )}}+\left \Vert \left( b-b_{B_{j}}\right) \chi _{l}\right \Vert
_{L^{p_{1}^{\prime }(\cdot )}}\left \Vert \chi _{k}\right \Vert
_{L^{p_{2}(\cdot )}}\right) \\
&\lesssim &2^{k\left( \beta -n\right) }\left \Vert a_{j}\right \Vert
_{L^{p_{1}(\cdot )}} \\
&&\times \left( \left( k-l\right) \left \Vert b\right \Vert _{BMO}\Vert \chi
_{B_{k}}\Vert _{L^{p_{2}(\cdot )}}\left \Vert \chi _{l}\right \Vert
_{L^{p_{1}^{\prime }(\cdot )}}+\left \Vert b\right \Vert _{BMO}\Vert \chi
_{B_{l}}\Vert _{L^{p_{1}^{\prime }(\cdot )}}\left \Vert \chi _{k}\right
\Vert _{L^{p_{2}(\cdot )}}\right) \\
&\lesssim &\left \Vert b\right \Vert _{BMO}2^{k\left( \beta -n\right) }\left
\Vert a_{l}\right \Vert _{L^{p_{1}(\cdot )}}\left( k-l\right) \Vert \chi
_{B_{k}}\Vert _{L^{p_{2}(\cdot )}}\left \Vert \chi _{l}\right \Vert
_{L^{p_{1}^{\prime }(\cdot )}} \\
&\lesssim &\left \Vert b\right \Vert _{BMO}2^{k\beta }\left( k-l\right)
\left \Vert a_{l}\right \Vert _{L^{p_{1}(\cdot )}}\left \Vert \chi
_{l}\right \Vert _{L^{p_{1}^{\prime }(\cdot )}}\left \Vert \chi
_{B_{k}}\right \Vert _{L^{p_{2}^{\prime }(\cdot )}}^{-1} \\
&\lesssim &\left \Vert b\right \Vert _{BMO}2^{k\beta }\left( k-l\right)
\left \Vert a_{l}\right \Vert _{L^{p_{1}(\cdot )}}\left \Vert \chi
_{l}\right \Vert _{L^{p_{1}^{\prime }(\cdot )}}\left \Vert \chi
_{B_{k}}\right \Vert _{L^{p_{2}^{\prime }(\cdot )}}^{-1}\frac{\left \Vert
\chi _{B_{l}}\right \Vert _{L^{p_{2}^{\prime }(\cdot )}}}{\left \Vert \chi
_{B_{k}}\right \Vert _{L^{p_{2}^{\prime }(\cdot )}}} \\
&\lesssim &\left \Vert b\right \Vert _{BMO}2^{k\beta }\left( k-l\right)
\left \Vert a_{l}\right \Vert _{L^{p_{1}(\cdot )}}\left \Vert \chi
_{B_{l}}\right \Vert _{L^{p_{1}^{\prime }(\cdot )}}\left \Vert \chi
_{B_{l}}\right \Vert _{L^{p_{2}^{\prime }(\cdot )}}^{-1}2^{n\delta
_{2}\left( l-k\right) } \\
&\lesssim &\left \Vert b\right \Vert _{BMO}2^{k\beta }\left( k-l\right)
\left \Vert a_{l}\right \Vert _{L^{p_{1}(\cdot )}}2^{\left( n-\beta \right)
l}\left \Vert \chi _{B_{l}}\right \Vert _{L^{p_{2}(\cdot )}}^{-1}\left \Vert
\chi _{B_{l}}\right \Vert _{L^{p_{2}^{\prime }(\cdot )}}^{-1}2^{n\delta
_{2}\left( l-k\right) } \\
&\lesssim &\left \Vert b\right \Vert _{BMO}2^{\left( n\delta _{2}-\beta
\right) \left( l-k\right) }\left( k-l\right) \left \Vert a_{l}\right \Vert
_{L^{p_{1}(\cdot )}}\left( 2^{-ln}\left \Vert \chi _{B_{l}}\right \Vert
_{L^{p_{2}(\cdot )}}\left \Vert \chi _{B_{l}}\right \Vert _{L^{p_{2}^{\prime
}(\cdot )}}\right) ^{-1} \\
&\lesssim &\left \Vert b\right \Vert _{BMO}2^{\left( \beta -n\delta
_{2}\right) \left( k-l\right) }\left( k-l\right) \left \Vert a_{l}\right
\Vert _{L^{p_{1}(\cdot )}} \\
&\lesssim &\left \Vert b\right \Vert _{BMO}\left( k-l\right) 2^{\left( \beta
-n\delta _{2}\right) \left( k-l\right) -\alpha _{l}l},
\end{eqnarray*}%
where in the sixth inequality we have used the following fact:

First, we know that 
\begin{equation*}
\chi _{B_{l}}\left( x\right) \lesssim 2^{-l\beta }\left[ b,I^{\beta }\right]
\left( \chi _{B_{l}}\right) \left( x\right) .
\end{equation*}%
Also, let $p_{1}\left( \cdot \right) \in \mathcal{P}\left( {\mathbb{R}^{n}}%
\right) $.and $0<\beta <\frac{n}{\left( p_{1}\right) _{+}}$. Then, by (\ref%
{14}) and using (\ref{5}), we obtain%
\begin{eqnarray*}
\left \Vert \chi _{B_{l}}\right \Vert _{L^{p_{2}(\cdot )}} &\lesssim
&2^{-l\beta }\left \Vert \left[ b,I^{\beta }\right] \left( \chi
_{B_{l}}\right) \right \Vert _{L^{p_{2}(\cdot )}} \\
&\lesssim &\left \Vert b\right \Vert _{BMO}2^{-l\beta }\left \Vert \chi
_{B_{l}}\right \Vert _{L^{p_{1}(\cdot )}} \\
&\lesssim &\left \Vert b\right \Vert _{BMO}2^{\left( n-\beta \right) l}\left
\Vert \chi _{B_{l}}\right \Vert _{L^{p_{1}^{\prime }(\cdot )}}^{-1}.
\end{eqnarray*}%
Thus, we get%
\begin{equation}
\left \Vert \left[ b,I^{\beta }\right] \left( a_{l}\right) \chi _{k}\right
\Vert _{L^{p_{2}(\cdot )}}\lesssim \left \Vert b\right \Vert _{BMO}\left(
k-l\right) 2^{\left( \beta -n\delta _{2}\right) \left( k-l\right) -\alpha
_{l}l}.  \label{12}
\end{equation}

To complete the proof, we consider $X,Y,Z$ into two cases $0<q_{1}\leq 1$
and $1<q_{1}<\infty $.

\textbf{Case 1: }$\left( 0<q_{1}\leq 1\right) $. In this case, we always use
(3.6) in \cite{Gurbuz1} and the convergence of a geometric series and
exchange order of summation and the convergence of geometric power series.

Firstly, since $\left[ b,I^{\beta }\right] $ is bounded from $L^{p_{1}\left(
\cdot \right) }$ to $L^{p_{2}\left( \cdot \right) }$ and using the condition 
$\left( 2\right) $ in Definition \ref{Definition}, then we get%
\begin{equation}
\left \Vert \left[ b,I^{\beta }\right] \left( a_{l}\right) \chi _{k}\right
\Vert _{L^{p_{2}(\cdot )}}\leq \left \Vert b\right \Vert _{BMO}\left \Vert
a_{l}\right \Vert _{L^{p_{1}(\cdot )}}\leq \left \Vert b\right \Vert
_{BMO}\left \vert B_{l}\right \vert ^{-\frac{\alpha _{j}}{n}}=\left \Vert
b\right \Vert _{BMO}2^{-l\alpha _{l}}.  \label{10}
\end{equation}

Thus, by (3.6) in \cite{Gurbuz1} and (\ref{10}), we obtain%
\begin{eqnarray*}
X_{1} &=&\sup \limits_{L\leq 0,L\in 
\mathbb{Z}
}2^{-L\lambda q_{1}}\sum \limits_{k=-\infty }^{L}2^{kq_{1}\alpha \left(
0\right) }\left( \sum \limits_{l=k}^{\infty }\left \vert \lambda _{l}\right
\vert \left \Vert \left( \left[ b,I^{\beta }\right] \left( a_{l}\right)
\right) \chi _{k}\right \Vert _{L^{p_{2}(\cdot )}}\right) ^{q_{1}} \\
&\lesssim &\sup \limits_{L\leq 0,L\in 
\mathbb{Z}
}2^{-L\lambda q_{1}}\sum \limits_{k=-\infty }^{L}2^{kq_{1}\alpha \left(
0\right) }\left( \sum \limits_{l=k}^{\infty }\left \vert \lambda _{l}\right
\vert \left \Vert b\right \Vert _{BMO}2^{-l\alpha _{l}}\right) ^{q_{1}} \\
&\lesssim &\left \Vert b\right \Vert _{BMO}^{q_{1}}\sup \limits_{L\leq
0,L\in 
\mathbb{Z}
}2^{-L\lambda q_{1}}\sum \limits_{k=-\infty }^{L}2^{kq_{1}\alpha \left(
0\right) }\left( \sum \limits_{l=k}^{-1}\left \vert \lambda _{l}\right \vert
^{q_{1}}2^{-l\alpha \left( 0\right) q_{1}}+\sum \limits_{l=0}^{\infty }\left
\vert \lambda _{l}\right \vert ^{q_{1}}2^{-l\alpha _{\infty }q_{1}}\right) \\
&\lesssim &\left \Vert b\right \Vert _{BMO}^{q_{1}}\sup \limits_{L\leq
0,L\in 
\mathbb{Z}
}2^{-L\lambda q_{1}}\sum \limits_{k=-\infty }^{L}\sum
\limits_{l=k}^{-1}\left \vert \lambda _{l}\right \vert ^{q_{1}}2^{\left(
k-l\right) \alpha \left( 0\right) q_{1}} \\
&&+\left \Vert b\right \Vert _{BMO}^{q_{1}}\sup \limits_{L\leq 0,L\in 
\mathbb{Z}
}2^{-L\lambda q_{1}}\sum \limits_{k=-\infty }^{L}2^{k\alpha \left( 0\right)
q_{1}}\sum \limits_{l=0}^{\infty }\left \vert \lambda _{l}\right \vert
^{q_{1}}2^{-l\alpha _{\infty }q_{1}} \\
&\lesssim &\left \Vert b\right \Vert _{BMO}^{q_{1}}\sup \limits_{L\leq
0,L\in 
\mathbb{Z}
}2^{-L\lambda q_{1}}\sum \limits_{l=-\infty }^{-1}\left \vert \lambda
_{l}\right \vert ^{q_{1}}\sum \limits_{k=-\infty }^{l}2^{\left( k-l\right)
\alpha \left( 0\right) q_{1}} \\
&&+\left \Vert b\right \Vert _{BMO}^{q_{1}}\sup \limits_{L\leq 0,L\in 
\mathbb{Z}
}\sum \limits_{l=0}^{\infty }2^{-l\lambda q_{1}}\left \vert \lambda
_{l}\right \vert ^{q_{1}}2^{\left( \lambda -\alpha _{\infty }\right)
lq_{1}}2^{-L\lambda q_{1}}\sum \limits_{k=-\infty }^{L}2^{k\alpha \left(
0\right) q_{1}} \\
&\leq &\left \Vert b\right \Vert _{BMO}^{q_{1}}\left( \sup \limits_{L\leq
0,L\in 
\mathbb{Z}
}2^{-L\lambda q_{1}}\sum \limits_{l=-\infty }^{L}\left \vert \lambda
_{l}\right \vert ^{q_{1}}+\sup \limits_{L\leq 0,L\in 
\mathbb{Z}
}2^{-L\lambda q_{1}}\sum \limits_{l=L}^{-1}\left \vert \lambda _{l}\right
\vert ^{q_{1}}\sum \limits_{k=-\infty }^{l}2^{\left( k-l\right) \alpha
\left( 0\right) q_{1}}\right) \\
&&+\left \Vert b\right \Vert _{BMO}^{q_{1}}\Psi \sup \limits_{L\leq 0,L\in 
\mathbb{Z}
}\sum \limits_{l=0}^{\infty }2^{\left( \lambda -\alpha _{\infty }\right)
lq_{1}}\sum \limits_{k=-\infty }^{L}2^{\left( \alpha \left( 0\right)
k-L\lambda \right) q_{1}},
\end{eqnarray*}%
which gives%
\begin{eqnarray*}
&\lesssim &\left \Vert b\right \Vert _{BMO}^{q_{1}}\left( \Psi +\sup
\limits_{L\leq 0,L\in 
\mathbb{Z}
}\sum \limits_{l=L}^{-1}2^{-l\lambda q_{1}}\left \vert \lambda _{l}\right
\vert ^{q_{1}}2^{\left( l-L\right) \lambda q_{1}}\sum \limits_{k=-\infty
}^{l}2^{\left( k-l\right) \alpha \left( 0\right) q_{1}}+\Psi \right) \\
&\lesssim &\left \Vert b\right \Vert _{BMO}^{q_{1}}\left( \Psi +\Psi \sup
\limits_{L\leq 0,L\in 
\mathbb{Z}
}\sum \limits_{l=L}^{-1}2^{\left( l-L\right) \lambda q_{1}}\sum
\limits_{k=-\infty }^{l}2^{\left( k-l\right) \alpha \left( 0\right)
q_{1}}\right) \\
&\lesssim &\left \Vert b\right \Vert _{BMO}^{q_{1}}\Psi \text{ }\left(
\alpha _{\infty }>\lambda \right) .
\end{eqnarray*}%
Secondly, by (\ref{12}), (3.6) in \cite{Gurbuz1} and the hypothesis $\beta
-n\delta _{2}<\alpha \left( 0\right) $, we get 
\begin{eqnarray*}
X_{2} &=&\sup \limits_{L\leq 0,L\in 
\mathbb{Z}
}2^{-L\lambda q_{1}}\sum \limits_{k=-\infty }^{L}2^{kq_{1}\alpha \left(
0\right) }\left( \sum \limits_{l=-\infty }^{k-1}\left \vert \lambda
_{l}\right \vert \left \Vert \left( \left[ b,I^{\beta }\right] \left(
a_{l}\right) \right) \chi _{k}\right \Vert _{L^{p_{2}(\cdot )}}\right)
^{q_{1}} \\
&\lesssim &\sup \limits_{L\leq 0,L\in 
\mathbb{Z}
}2^{-L\lambda q_{1}}\sum \limits_{k=-\infty }^{L}2^{kq_{1}\alpha \left(
0\right) }\left( \sum \limits_{l=-\infty }^{k-1}\left \Vert b\right \Vert
_{BMO}^{q_{1}}\left( k-l\right) ^{q_{1}}\left \vert \lambda _{l}\right \vert
^{q_{1}}2^{\left[ \left( \beta -n\delta _{2}\right) \left( k-l\right)
-\alpha _{l}l\right] q_{1}}\right) \\
&\lesssim &\left \Vert b\right \Vert _{BMO}^{q_{1}}\sup \limits_{L\leq
0,L\in 
\mathbb{Z}
}2^{-L\lambda q_{1}}\sum \limits_{k=-\infty }^{L}2^{kq_{1}\alpha \left(
0\right) }\left( \sum \limits_{l=-\infty }^{k-1}\left( k-l\right)
^{q_{1}}\left \vert \lambda _{l}\right \vert ^{q_{1}}2^{\left[ \left( \beta
-n\delta _{2}\right) \left( k-l\right) -\alpha \left( 0\right) l\right]
q_{1}}\right) \\
&\lesssim &\left \Vert b\right \Vert _{BMO}^{q_{1}}\sup \limits_{L\leq
0,L\in 
\mathbb{Z}
}2^{-L\lambda q_{1}}\sum \limits_{l=-\infty }^{L}\left \vert \lambda
_{j}\right \vert ^{q_{1}}\sum \limits_{k=l+1}^{-1}\left( k-l\right)
^{q_{1}}2^{\left( \beta -n\delta _{2}-\alpha \left( 0\right) \right) \left(
k-l\right) q_{1}} \\
&\lesssim &\left \Vert b\right \Vert _{BMO}^{q_{1}}\Psi .
\end{eqnarray*}%
Thirdly, by (\ref{10}) and (3.6) in \cite{Gurbuz1}, we obtain that 
\begin{eqnarray*}
Y_{1} &=&\sum \limits_{k=-\infty }^{-1}2^{kq_{1}\alpha \left( 0\right)
}\left( \sum \limits_{l=k}^{\infty }\left \vert \lambda _{l}\right \vert
\left \Vert \left( \left[ b,I^{\beta }\right] \left( a_{l}\right) \right)
\chi _{k}\right \Vert _{L^{p_{2}(\cdot )}}\right) ^{q_{1}} \\
&\lesssim &\sum \limits_{k=-\infty }^{-1}2^{kq_{1}\alpha \left( 0\right)
}\left( \sum \limits_{l=k}^{\infty }\left \vert \lambda _{l}\right \vert
\left \Vert b\right \Vert _{BMO}2^{-l\alpha _{l}}\right) ^{q_{1}} \\
&\lesssim &\left \Vert b\right \Vert _{BMO}^{q_{1}}\sum \limits_{k=-\infty
}^{-1}2^{kq_{1}\alpha \left( 0\right) }\left( \sum \limits_{l=k}^{-1}\left
\vert \lambda _{l}\right \vert ^{q_{1}}2^{-l\alpha \left( 0\right)
q_{1}}+\sum \limits_{l=0}^{\infty }\left \vert \lambda _{l}\right \vert
^{q_{1}}2^{-l\alpha _{\infty }q_{1}}\right) \\
&\lesssim &\left \Vert b\right \Vert _{BMO}^{q_{1}}\left( \sum
\limits_{k=-\infty }^{-1}\sum \limits_{l=k}^{-1}\left \vert \lambda
_{l}\right \vert ^{q_{1}}2^{\left( k-l\right) \alpha \left( 0\right)
q_{1}}+\sum \limits_{k=-\infty }^{-1}2^{k\alpha \left( 0\right) q_{1}}\sum
\limits_{l=0}^{\infty }\left \vert \lambda _{l}\right \vert
^{q_{1}}2^{-l\alpha _{\infty }q_{1}}\right) \\
&\lesssim &\left \Vert b\right \Vert _{BMO}^{q_{1}}\left( \sum
\limits_{l=-\infty }^{-1}\left \vert \lambda _{l}\right \vert ^{q_{1}}\sum
\limits_{k=\infty }^{l}2^{\left( k-l\right) \alpha \left( 0\right)
q_{1}}+\sum \limits_{l=0}^{\infty }\left \vert \lambda _{l}\right \vert
^{q_{1}}2^{-l\alpha _{\infty }q_{1}}\sum \limits_{k=-\infty }^{-1}2^{k\alpha
\left( 0\right) q_{1}}\right) \\
&\lesssim &\left \Vert b\right \Vert _{BMO}^{q_{1}}\left( \sum
\limits_{l=-\infty }^{-1}\left \vert \lambda _{l}\right \vert ^{q_{1}}+\sum
\limits_{l=0}^{\infty }\left \vert \lambda _{l}\right \vert
^{q_{1}}2^{-l\alpha _{\infty }q_{1}}2^{-l\lambda q_{1}}\sum
\limits_{k=-\infty }^{-1}2^{k\alpha \left( 0\right) q_{1}}\right) \\
&\lesssim &\left \Vert b\right \Vert _{BMO}^{q_{1}}\left( \Psi +\Psi \sum
\limits_{i=-\infty }^{l}\left \vert \lambda _{i}\right \vert ^{q_{1}}\sum
\limits_{l=0}^{\infty }2^{\left( \lambda -\alpha _{\infty }\right)
q_{1}l}\sum \limits_{k=-\infty }^{l}2^{k\alpha \left( 0\right) q_{1}}\right)
\\
&\lesssim &\left \Vert b\right \Vert _{BMO}^{q_{1}}\Psi \text{ }\left(
\alpha _{\infty }>\lambda \right) .
\end{eqnarray*}%
Fourthly, by (\ref{12}), (3.6) in \cite{Gurbuz1} and the assumption $\beta
-n\delta _{2}<\alpha \left( 0\right) $, we obtain that%
\begin{eqnarray*}
Y_{2} &=&\sum \limits_{k=-\infty }^{-1}2^{kq_{1}\alpha \left( 0\right)
}\left( \sum \limits_{l=-\infty }^{k-1}\left \vert \lambda _{l}\right \vert
\left \Vert \left( \left[ b,I^{\beta }\right] \left( a_{l}\right) \right)
\chi _{k}\right \Vert _{L^{p_{2}(\cdot )}}\right) ^{q_{1}} \\
&\lesssim &\left \Vert b\right \Vert _{BMO}^{q_{1}}\sum \limits_{k=-\infty
}^{-1}2^{kq_{1}\alpha \left( 0\right) }\left( \sum \limits_{l=-\infty
}^{k-1}\left \vert \lambda _{l}\right \vert ^{q_{1}}\left( k-l\right)
^{q_{1}}2^{\left[ \left( \beta -n\delta _{2}\right) \left( k-l\right)
-\alpha _{l}l\right] q_{1}}\right) \\
&\lesssim &\left \Vert b\right \Vert _{BMO}^{q_{1}}\sum \limits_{k=-\infty
}^{-1}\left \vert \lambda _{l}\right \vert ^{q_{1}}\sum
\limits_{k=l+1}^{-1}\left( k-l\right) ^{q_{1}}2^{\left( \beta -n\delta
_{2}-\alpha \left( 0\right) \right) \left( k-l\right) q_{1}} \\
&\lesssim &\left \Vert b\right \Vert _{BMO}^{q_{1}}\Psi .
\end{eqnarray*}%
Fifthly, similar to $X_{1}$, using (\ref{10}) and (3.6) in \cite{Gurbuz1},
we have%
\begin{eqnarray*}
Z_{1} &=&\sup \limits_{L>0,L\in 
\mathbb{Z}
}2^{-L\lambda q_{1}}\sum \limits_{k=0}^{L}2^{kq_{1}\alpha _{\infty }}\left(
\sum \limits_{l=k}^{\infty }\left \vert \lambda _{l}\right \vert \left \Vert
\left( \left[ b,I^{\beta }\right] \left( a_{l}\right) \right) \chi
_{k}\right \Vert _{L^{p_{2}(\cdot )}}\right) ^{q_{1}} \\
&\lesssim &\sup \limits_{L>0,L\in 
\mathbb{Z}
}2^{-L\lambda q_{1}}\sum \limits_{k=0}^{L}2^{kq_{1}\alpha _{\infty }}\sum
\limits_{l=k}^{\infty }\left \vert \lambda _{l}\right \vert ^{q_{1}}\left
\Vert \left( \left[ b,I^{\beta }\right] \left( a_{l}\right) \right) \chi
_{k}\right \Vert _{L^{p_{2}(\cdot )}}^{q_{1}} \\
&\lesssim &\left \Vert b\right \Vert _{BMO}^{q_{1}}\sup \limits_{L>0,L\in 
\mathbb{Z}
}2^{-L\lambda q_{1}}\sum \limits_{k=0}^{L}2^{kq_{1}\alpha _{\infty }}\sum
\limits_{l=k}^{\infty }\left \vert \lambda _{l}\right \vert
^{q_{1}}2^{-l\alpha _{l}q_{1}} \\
&\lesssim &\left \Vert b\right \Vert _{BMO}^{q_{1}}\sup \limits_{L>0,L\in 
\mathbb{Z}
}2^{-L\lambda q_{1}}\sum \limits_{k=0}^{L}2^{kq_{1}\alpha _{\infty }}\sum
\limits_{l=k}^{\infty }\left \vert \lambda _{l}\right \vert
^{q_{1}}2^{-l\alpha _{\infty }q_{1}} \\
&=&\left \Vert b\right \Vert _{BMO}^{q_{1}}\sup \limits_{L>0,L\in 
\mathbb{Z}
}2^{-L\lambda q_{1}}\sum \limits_{l=0}^{L}\left \vert \lambda _{l}\right
\vert ^{q_{1}}\sum \limits_{k=0}^{l}2^{\left( k-l\right) q_{1}\alpha
_{\infty }} \\
&&+\left \Vert b\right \Vert _{BMO}^{q_{1}}\sup \limits_{L>0,L\in 
\mathbb{Z}
}2^{-L\lambda q_{1}}\sum \limits_{l=L}^{\infty }\left \vert \lambda
_{l}\right \vert ^{q_{1}}\sum \limits_{k=0}^{L}2^{\left( k-l\right)
q_{1}\alpha _{\infty }} \\
&\lesssim &\left \Vert b\right \Vert _{BMO}^{q_{1}}\sup \limits_{L>0,L\in 
\mathbb{Z}
}2^{-L\lambda q_{1}}\sum \limits_{l=0}^{L}\left \vert \lambda _{l}\right
\vert ^{q_{1}} \\
&&+\left \Vert b\right \Vert _{BMO}^{q_{1}}\sup \limits_{L>0,L\in 
\mathbb{Z}
}\sum \limits_{l=L}^{\infty }2^{\left( l-L\right) q_{1}\lambda }2^{-l\lambda
q_{1}}\dsum \limits_{i=-\infty }^{l}\left \vert \lambda _{i}\right \vert
^{q_{1}}\sum \limits_{k=0}^{L}2^{\left( k-l\right) q_{1}\alpha _{\infty }},
\end{eqnarray*}%
which implies%
\begin{eqnarray*}
&\lesssim &\left \Vert b\right \Vert _{BMO}^{q_{1}}\Psi +\left \Vert b\right
\Vert _{BMO}^{q_{1}}\Psi \sup \limits_{L>0,L\in 
\mathbb{Z}
}\sum \limits_{l=L}^{\infty }2^{\left( l-L\right) \lambda q_{1}}2^{\left(
L-l\right) q_{1}\alpha _{\infty }} \\
&\lesssim &\left \Vert b\right \Vert _{BMO}^{q_{1}}\Psi +\left \Vert b\right
\Vert _{BMO}^{q_{1}}\Psi \sup \limits_{L>0,L\in 
\mathbb{Z}
}\sum \limits_{l=L}^{\infty }2^{\left( l-L\right) q_{1}\left( \lambda
-\alpha _{\infty }\right) } \\
&\lesssim &\left \Vert b\right \Vert _{BMO}^{q_{1}}\Psi \text{ }\left(
\alpha _{\infty }>\lambda \right) .
\end{eqnarray*}%
Finally, by (\ref{12}), (3.6) in \cite{Gurbuz1} and the assumptions that $%
\beta -n\delta _{2}<\alpha \left( 0\right) $ and $\beta -n\delta _{2}<\alpha
_{\infty }$, we get%
\begin{eqnarray*}
Z_{2} &=&\sup \limits_{L>0,L\in 
\mathbb{Z}
}2^{-L\lambda q_{1}}\sum \limits_{k=0}^{L}2^{kq_{1}\alpha _{\infty }}\left(
\sum \limits_{l=-\infty }^{k-1}\left \vert \lambda _{l}\right \vert \left
\Vert \left( \left[ b,I^{\beta }\right] \left( a_{l}\right) \right) \chi
_{k}\right \Vert _{L^{p_{2}(\cdot )}}\right) ^{q_{1}} \\
&\lesssim &\left \Vert b\right \Vert _{BMO}^{q_{1}}\sup \limits_{L>0,L\in 
\mathbb{Z}
}2^{-L\lambda q_{1}}\sum \limits_{k=0}^{L}2^{kq_{1}\alpha _{\infty }}\left(
\sum \limits_{l=-\infty }^{k-1}\left \vert \lambda _{l}\right \vert
^{q_{1}}\left( k-l\right) ^{q_{1}}2^{\left[ \left( \beta -n\delta
_{2}\right) \left( k-l\right) -\alpha _{l}l\right] q_{1}}\right) \\
&=&\left \Vert b\right \Vert _{BMO}^{q_{1}}\sup \limits_{L>0,L\in 
\mathbb{Z}
}2^{-L\lambda q_{1}}\sum \limits_{k=0}^{L}2^{kq_{1}\alpha _{\infty }}\left(
\sum \limits_{l=-\infty }^{-1}\left \vert \lambda _{l}\right \vert
^{q_{1}}\left( k-l\right) ^{q_{1}}2^{\left[ \left( \beta -n\delta
_{2}\right) \left( k-l\right) -\alpha \left( 0\right) l\right] q_{1}}\right)
\\
&&+\left \Vert b\right \Vert _{BMO}^{q_{1}}\sup \limits_{L>0,L\in 
\mathbb{Z}
}2^{-L\lambda q_{1}}\sum \limits_{k=0}^{L}2^{kq_{1}\alpha _{\infty }}\left(
\sum \limits_{l=0}^{k-1}\left \vert \lambda _{l}\right \vert ^{q_{1}}\left(
k-l\right) ^{q_{1}}2^{\left[ \left( \beta -n\delta _{2}\right) \left(
k-l\right) -\alpha _{\infty }l\right] q_{1}}\right) \\
&\lesssim &\left \Vert b\right \Vert _{BMO}^{q_{1}}\sup \limits_{L>0,L\in 
\mathbb{Z}
}2^{-L\lambda q_{1}}\sum \limits_{k=0}^{L}\left( k-l\right) ^{q_{1}}2^{kq_{1}%
\left[ \alpha _{\infty }-\left( \beta -n\delta _{2}\right) \right] }\sum
\limits_{l=-\infty }^{-1}\left \vert \lambda _{l}\right \vert ^{q_{1}}2^{%
\left[ \beta -n\delta _{2}-\alpha \left( 0\right) \right] lq_{1}} \\
&&+\left \Vert b\right \Vert _{BMO}^{q_{1}}\sup \limits_{L>0,L\in 
\mathbb{Z}
}2^{-L\lambda q_{1}}\sum \limits_{k=0}^{L}\left \vert \lambda _{l}\right
\vert ^{q_{1}}\sum \limits_{k=l+1}^{\infty }\left( k-l\right) ^{q_{1}}2^{%
\left[ \left( \beta -n\delta _{2}-\alpha _{\infty }\right) \left( l-k\right) %
\right] q_{1}} \\
&\lesssim &\left \Vert b\right \Vert _{BMO}^{q_{1}}\left( \sup
\limits_{L>0,L\in 
\mathbb{Z}
}2^{-L\lambda q_{1}}\sum \limits_{l=-\infty }^{-1}\left \vert \lambda
_{l}\right \vert ^{q_{1}}+\sup \limits_{L>0,L\in 
\mathbb{Z}
}2^{-L\lambda q_{1}}\sum \limits_{l=0}^{L-1}\left \vert \lambda _{l}\right
\vert ^{q_{1}}\right) \\
&\lesssim &\left \Vert b\right \Vert _{BMO}^{q_{1}}\Psi .
\end{eqnarray*}%
\textbf{Case 2: }$\left( 1<q_{1}<\infty \right) $. In this case, similar to
Case 1, we always exchange order of summation and use the convergence of a
geometric series, but use (\ref{3}) instead of (3.6) in \cite{Gurbuz1}.

Indeed, for $X_{1}$, let $\frac{1}{q_{1}}+\frac{1}{q_{1}^{\prime }}=1$. By (%
\ref{3}) and (\ref{10}), we have%
\begin{eqnarray*}
X_{1} &=&\sup \limits_{L\leq 0,L\in 
\mathbb{Z}
}2^{-L\lambda q_{1}}\sum \limits_{k=-\infty }^{L}2^{kq_{1}\alpha \left(
0\right) }\left( \sum \limits_{l=k}^{\infty }\left \vert \lambda _{l}\right
\vert \left \Vert \left( \left[ b,I^{\beta }\right] \left( a_{l}\right)
\right) \chi _{k}\right \Vert _{L^{p_{2}(\cdot )}}\right) ^{q_{1}} \\
&\lesssim &\left \Vert b\right \Vert _{BMO}^{q_{1}}\sup \limits_{L\leq
0,L\in 
\mathbb{Z}
}2^{-L\lambda q_{1}}\sum \limits_{k=-\infty }^{L}2^{kq_{1}\alpha \left(
0\right) }\left( \sum \limits_{l=k}^{\infty }\left \vert \lambda _{l}\right
\vert 2^{-l\alpha _{l}}\right) ^{q_{1}} \\
&\lesssim &\left \Vert b\right \Vert _{BMO}^{q_{1}}\sup \limits_{L\leq
0,L\in 
\mathbb{Z}
}2^{-L\lambda q_{1}}\sum \limits_{k=-\infty }^{L}\left( \sum
\limits_{l=k}^{-1}\left \vert \lambda _{l}\right \vert 2^{\left( k-l\right)
\alpha \left( 0\right) }\right) ^{q_{1}} \\
&&+\left \Vert b\right \Vert _{BMO}^{q_{1}}\sup \limits_{L\leq 0,L\in 
\mathbb{Z}
}2^{-L\lambda q_{1}}\sum \limits_{k=-\infty }^{L}2^{k\alpha \left( 0\right)
q_{1}}\left( \sum \limits_{l=0}^{\infty }\left \vert \lambda _{l}\right
\vert 2^{-l\alpha _{\infty }}\right) ^{q_{1}} \\
&\lesssim &\left \Vert b\right \Vert _{BMO}^{q_{1}}\sup \limits_{L\leq
0,L\in 
\mathbb{Z}
}2^{-L\lambda q_{1}}\sum \limits_{k=-\infty }^{L}\left( \sum
\limits_{l=k}^{-1}\left \vert \lambda _{l}\right \vert ^{q_{1}}2^{\frac{%
\left( k-l\right) \alpha \left( 0\right) q_{1}}{2}}\right) \left( \sum
\limits_{l=k}^{-1}2^{\frac{\left( k-l\right) \alpha \left( 0\right)
q_{1}^{\prime }}{2}}\right) ^{\frac{q_{1}}{q_{1}^{\prime }}} \\
&&+\left \Vert b\right \Vert _{BMO}^{q_{1}}\sup \limits_{L\leq 0,L\in 
\mathbb{Z}
}2^{-L\lambda q_{1}}\sum \limits_{k=-\infty }^{L}2^{kq_{1}\alpha \left(
0\right) }\left( \sum \limits_{l=0}^{\infty }\left \vert \lambda _{l}\right
\vert ^{q_{1}}2^{\frac{-l\alpha _{\infty }q_{1}}{2}}\right) \left( \sum
\limits_{l=0}^{\infty }2^{\frac{-l\alpha _{\infty }q_{1}^{\prime }}{2}%
}\right) ^{\frac{q_{1}}{q_{1}^{\prime }}},
\end{eqnarray*}%
which gives%
\begin{eqnarray*}
&\lesssim &\left \Vert b\right \Vert _{BMO}^{q_{1}}\sup \limits_{L\leq
0,L\in 
\mathbb{Z}
}2^{-L\lambda q_{1}}\sum \limits_{k=-\infty }^{L}\sum
\limits_{l=k}^{-1}\left \vert \lambda _{l}\right \vert ^{q_{1}}2^{\frac{%
\left( k-l\right) \alpha \left( 0\right) q_{1}}{2}} \\
&&+\left \Vert b\right \Vert _{BMO}^{q_{1}}\sup \limits_{L\leq 0,L\in 
\mathbb{Z}
}2^{-L\lambda q_{1}}\sum \limits_{k=-\infty }^{L}2^{kq_{1}\alpha \left(
0\right) }\sum \limits_{l=0}^{\infty }\left \vert \lambda _{l}\right \vert
^{q_{1}}2^{\frac{-l\alpha _{\infty }q_{1}}{2}} \\
&\lesssim &\left \Vert b\right \Vert _{BMO}^{q_{1}}\sup \limits_{L\leq
0,L\in 
\mathbb{Z}
}2^{-L\lambda q_{1}}\sum \limits_{l=-\infty }^{-1}\left \vert \lambda
_{l}\right \vert ^{q_{1}}\sum \limits_{k=-\infty }^{l}2^{\frac{\left(
k-l\right) \alpha \left( 0\right) q_{1}}{2}} \\
&&+\left \Vert b\right \Vert _{BMO}^{q_{1}}\sup \limits_{L\leq 0,L\in 
\mathbb{Z}
}\sum \limits_{l=0}^{\infty }2^{-l\lambda q_{1}}\left \vert \lambda
_{l}\right \vert ^{q_{1}}2^{\left( \lambda -\frac{\alpha _{\infty }}{2}%
\right) lq_{1}}2^{-L\lambda q_{1}}\sum \limits_{k=-\infty }^{L}2^{k\alpha
\left( 0\right) q_{1}} \\
&\leq &\left \Vert b\right \Vert _{BMO}^{q_{1}}\left( \sup \limits_{L\leq
0,L\in 
\mathbb{Z}
}2^{-L\lambda q_{1}}\sum \limits_{l=-\infty }^{L}\left \vert \lambda
_{l}\right \vert ^{q_{1}}+\sup \limits_{L\leq 0,L\in 
\mathbb{Z}
}2^{-L\lambda q_{1}}\sum \limits_{l=L}^{-1}\left \vert \lambda _{l}\right
\vert ^{q_{1}}\sum \limits_{k=-\infty }^{l}2^{\frac{\left( k-l\right) \alpha
\left( 0\right) q_{1}}{2}}\right) \\
&&+\left \Vert b\right \Vert _{BMO}^{q_{1}}\Psi \sup \limits_{L\leq 0,L\in 
\mathbb{Z}
}\sum \limits_{l=0}^{\infty }2^{\left( \lambda -\frac{\alpha _{\infty }}{2}%
\right) lq_{1}}\sum \limits_{k=-\infty }^{L}2^{\left( \alpha \left( 0\right)
k-L\lambda \right) q_{1}} \\
&\lesssim &\left \Vert b\right \Vert _{BMO}^{q_{1}}\left( \Psi +\sup
\limits_{L\leq 0,L\in 
\mathbb{Z}
}\sum \limits_{l=L}^{-1}2^{-l\lambda q_{1}}\left \vert \lambda _{l}\right
\vert ^{q_{1}}2^{\left( l-L\right) \lambda q_{1}}\sum \limits_{k=-\infty
}^{l}2^{\frac{\left( k-l\right) \alpha \left( 0\right) q_{1}}{2}}+\Psi
\right) \\
&\lesssim &\left \Vert b\right \Vert _{BMO}^{q_{1}}\left( \Psi +\Psi \sup
\limits_{L\leq 0,L\in 
\mathbb{Z}
}\sum \limits_{l=L}^{-1}2^{\left( l-L\right) \lambda q_{1}}\sum
\limits_{k=-\infty }^{l}2^{\frac{\left( k-l\right) \alpha \left( 0\right)
q_{1}}{2}}\right) \\
&\lesssim &\left \Vert b\right \Vert _{BMO}^{q_{1}}\Psi \text{ }\left(
\alpha _{\infty }>2\lambda \right) .
\end{eqnarray*}%
For $X_{2}$, let $\frac{1}{q_{1}}+\frac{1}{q_{1}^{\prime }}=1$. By (\ref{3}%
), (\ref{12}) and the assumption $\beta -n\delta _{2}<\alpha \left( 0\right) 
$, we get%
\begin{eqnarray*}
X_{2} &=&\sup \limits_{L\leq 0,L\in 
\mathbb{Z}
}2^{-L\lambda q_{1}}\sum \limits_{k=-\infty }^{L}2^{kq_{1}\alpha \left(
0\right) }\left( \sum \limits_{l=-\infty }^{k-1}\left \vert \lambda
_{l}\right \vert \left \Vert \left( \left[ b,I^{\beta }\right] \left(
a_{l}\right) \right) \chi _{k}\right \Vert _{L^{p_{2}(\cdot )}}\right)
^{q_{1}} \\
&\lesssim &\left \Vert b\right \Vert _{BMO}^{q_{1}}\sup \limits_{L\leq
0,L\in 
\mathbb{Z}
}2^{-L\lambda q_{1}}\sum \limits_{k=-\infty }^{L}2^{kq_{1}\alpha \left(
0\right) }\left( \sum \limits_{l=-\infty }^{k-1}\left \vert \lambda
_{l}\right \vert ^{q_{1}}\left( k-l\right) ^{q_{1}}2^{\left[ \left( \beta
-n\delta _{2}\right) \left( k-l\right) -\alpha _{l}l\right] q_{1}}\right) \\
&\lesssim &\left \Vert b\right \Vert _{BMO}^{q_{1}}\sup \limits_{L\leq
0,L\in 
\mathbb{Z}
}2^{-L\lambda q_{1}}\sum \limits_{k=-\infty }^{L}2^{kq_{1}\alpha \left(
0\right) }\left( \sum \limits_{l=-\infty }^{k-1}\left \vert \lambda
_{l}\right \vert ^{q_{1}}\left( k-l\right) ^{q_{1}}2^{\left[ \left( \beta
-n\delta _{2}\right) \left( k-l\right) -\alpha \left( 0\right) l\right] 
\frac{q_{1}}{2}}\right) \\
&&\times \left( \sum \limits_{l=-\infty }^{k-1}2^{\left[ \left( \beta
-n\delta _{2}\right) \left( k-l\right) -\alpha \left( 0\right) l\right] 
\frac{q_{1}^{\prime }}{2}}\right) ^{\frac{q_{1}}{q_{1}^{\prime }}} \\
&\lesssim &\left \Vert b\right \Vert _{BMO}^{q_{1}}\sup \limits_{L\leq
0,L\in 
\mathbb{Z}
}2^{-L\lambda q_{1}}\sum \limits_{k=-\infty }^{L}2^{kq_{1}\alpha \left(
0\right) }\left( \sum \limits_{l=-\infty }^{k-1}\left \vert \lambda
_{l}\right \vert ^{q_{1}}\left( k-l\right) ^{q_{1}}2^{\left[ \left( \beta
-n\delta _{2}\right) \left( k-l\right) -\alpha \left( 0\right) l\right] 
\frac{q_{1}}{2}}\right) \\
&\lesssim &\left \Vert b\right \Vert _{BMO}^{q_{1}}\sup \limits_{L\leq
0,L\in 
\mathbb{Z}
}2^{-L\lambda q_{1}}\sum \limits_{l=-\infty }^{L}\left \vert \lambda
_{l}\right \vert ^{q_{1}}\sum \limits_{k=l+1}^{-1}2^{\left[ \left( \beta
-n\delta _{2}-\alpha \left( 0\right) \right) \left( k-l\right) \right] \frac{%
q_{1}}{2}} \\
&\lesssim &\left \Vert b\right \Vert _{BMO}^{q_{1}}\Psi .
\end{eqnarray*}%
For $Y_{1}$, let $\frac{1}{q_{1}}+\frac{1}{q_{1}^{\prime }}=1$. Then,
applying (\ref{3}) and (\ref{10}), we know that%
\begin{eqnarray*}
Y_{1} &=&\sum \limits_{k=-\infty }^{-1}2^{kq_{1}\alpha \left( 0\right)
}\left( \sum \limits_{l=k}^{\infty }\left \vert \lambda _{l}\right \vert
\left \Vert \left( \left[ b,I^{\beta }\right] \left( a_{l}\right) \right)
\chi _{k}\right \Vert _{L^{p_{2}(\cdot )}}\right) ^{q_{1}} \\
&\lesssim &\left \Vert b\right \Vert _{BMO}^{q_{1}}\sum \limits_{k=-\infty
}^{-1}2^{kq_{1}\alpha \left( 0\right) }\left( \sum \limits_{l=k}^{\infty
}\left \vert \lambda _{l}\right \vert 2^{-l\alpha _{l}}\right) ^{q_{1}} \\
&\lesssim &\left \Vert b\right \Vert _{BMO}^{q_{1}}\left[ \sum
\limits_{k=-\infty }^{-1}\left( \sum \limits_{l=k}^{-1}\left \vert \lambda
_{l}\right \vert 2^{\left( k-l\right) \alpha \left( 0\right) }\right)
^{q_{1}}+\sum \limits_{k=-\infty }^{-1}2^{kq_{1}\alpha \left( 0\right)
}\left( \sum \limits_{l=0}^{\infty }\left \vert \lambda _{l}\right \vert
2^{-l\alpha _{\infty }}\right) ^{q_{1}}\right] \\
&\lesssim &\left \Vert b\right \Vert _{BMO}^{q_{1}}\sum \limits_{k=-\infty
}^{-1}\left( \sum \limits_{l=k}^{-1}\left \vert \lambda _{l}\right \vert
^{q_{1}}2^{\left( k-l\right) \alpha \left( 0\right) \frac{q_{1}}{2}}\right)
\left( \sum \limits_{l=k}^{-1}2^{\left( k-l\right) \alpha \left( 0\right) 
\frac{q_{1}^{\prime }}{2}}\right) ^{\frac{q_{1}}{q_{1}^{\prime }}} \\
&&+\left \Vert b\right \Vert _{BMO}^{q_{1}}\sum \limits_{k=-\infty
}^{-1}2^{kq_{1}\alpha \left( 0\right) }\left( \sum \limits_{l=0}^{\infty
}\left \vert \lambda _{l}\right \vert ^{q_{1}}2^{\left( -l\alpha _{\infty
}\right) \frac{q_{1}}{2}}\right) \left( \sum \limits_{l=0}^{\infty
}2^{\left( -l\alpha _{\infty }\right) \frac{q_{1}^{\prime }}{2}}\right) ^{%
\frac{q_{1}}{q_{1}^{\prime }}} \\
&\lesssim &\left \Vert b\right \Vert _{BMO}^{q_{1}}\left( \sum
\limits_{l=-\infty }^{-1}\left \vert \lambda _{l}\right \vert ^{q_{1}}\sum
\limits_{k=-\infty }^{l}2^{\left( k-l\right) \alpha \left( 0\right) \frac{%
q_{1}}{2}}+\sum \limits_{k=-\infty }^{-1}2^{kq_{1}\alpha \left( 0\right)
}\sum \limits_{l=0}^{\infty }\left \vert \lambda _{l}\right \vert
^{q_{1}}2^{-l\alpha _{\infty }\frac{q_{1}}{2}}\right) \\
&\lesssim &\left \Vert b\right \Vert _{BMO}^{q_{1}}\left( \sum
\limits_{l=-\infty }^{-1}\left \vert \lambda _{l}\right \vert ^{q_{1}}+\sum
\limits_{l=0}^{\infty }2^{\left( \lambda -\frac{\alpha _{\infty }}{2}\right)
lq_{1}}2^{-l\lambda q_{1}}\sum \limits_{i=-\infty }^{j}\left \vert \lambda
_{i}\right \vert ^{q_{1}}\sum \limits_{k=-\infty }^{-1}2^{kq_{1}\alpha
\left( 0\right) }\right) \\
&\lesssim &\left \Vert b\right \Vert _{BMO}^{q_{1}}\left( \Psi +\Psi \sum
\limits_{l=0}^{\infty }2^{\left( \lambda -\frac{\alpha _{\infty }}{2}\right)
lq_{1}}\sum \limits_{k=-\infty }^{-1}2^{k\alpha \left( 0\right) q_{1}}\right)
\\
&\lesssim &\left \Vert b\right \Vert _{BMO}^{q_{1}}\Psi \text{ }\left(
\alpha _{\infty }>\lambda \right) .
\end{eqnarray*}%
For $Y_{2}$, let $\frac{1}{q_{1}}+\frac{1}{q_{1}^{\prime }}=1$. Then, (\ref%
{3}), (\ref{12}) and the assumption $\beta -n\delta _{2}<\alpha \left(
0\right) $, we have%
\begin{eqnarray*}
Y_{2} &=&\sum \limits_{k=-\infty }^{-1}2^{kq_{1}\alpha \left( 0\right)
}\left( \sum \limits_{l=-\infty }^{k-1}\left \vert \lambda _{l}\right \vert
\left \Vert \left( \left[ b,I^{\beta }\right] \left( a_{l}\right) \right)
\chi _{k}\right \Vert _{L^{p_{2}(\cdot )}}\right) ^{q_{1}} \\
&\lesssim &\left \Vert b\right \Vert _{BMO}^{q_{1}}\sum \limits_{k=-\infty
}^{-1}2^{kq_{1}\alpha \left( 0\right) }\left( \sum \limits_{l=-\infty
}^{k-1}\left \vert \lambda _{l}\right \vert \left( k-l\right) 2^{\left(
\beta -n\delta _{2}\right) \left( k-l\right) -\alpha \left( 0\right)
l}\right) ^{q_{1}} \\
&\lesssim &\left \Vert b\right \Vert _{BMO}^{q_{1}}\sum \limits_{k=-\infty
}^{-1}2^{kq_{1}\alpha \left( 0\right) }\left( \sum \limits_{l=-\infty
}^{k-1}\left \vert \lambda _{l}\right \vert ^{q_{1}}2^{\left[ \left( \beta
-n\delta _{2}\right) \left( k-l\right) -\alpha \left( 0\right) l\right] 
\frac{q_{1}}{2}}\right) \\
&&\times \left( \sum \limits_{l=-\infty }^{k-1}\left( k-l\right) 2^{\left[
\left( \beta -n\delta _{2}\right) \left( k-l\right) -\alpha \left( 0\right) l%
\right] \frac{q_{1}^{\prime }}{2}}\right) ^{\frac{q_{1}}{q_{1}^{\prime }}} \\
&\lesssim &\left \Vert b\right \Vert _{BMO}^{q_{1}}\sum \limits_{k=-\infty
}^{-1}2^{kq_{1}\alpha \left( 0\right) }\left( \sum \limits_{l=-\infty
}^{k-1}\left \vert \lambda _{l}\right \vert ^{q_{1}}2^{\left[ \left( \beta
-n\delta _{2}\right) \left( k-l\right) -\alpha \left( 0\right) l\right] 
\frac{q_{1}}{2}}\right) ,
\end{eqnarray*}%
which implies%
\begin{eqnarray*}
&\lesssim &\left \Vert b\right \Vert _{BMO}^{q_{1}}\sum \limits_{l=-\infty
}^{-1}\left \vert \lambda _{l}\right \vert ^{q_{1}}\sum
\limits_{k=l+1}^{-1}2^{\left( \beta -n\delta _{2}-\alpha \left( 0\right)
\right) \left( k-l\right) \frac{q_{1}}{2}} \\
&\lesssim &\left \Vert b\right \Vert _{BMO}^{q_{1}}\Psi .
\end{eqnarray*}%
For $Z_{1}$, let $\frac{1}{q_{1}}+\frac{1}{q_{1}^{\prime }}=1$. Then, using (%
\ref{3}) and (\ref{10}), we have%
\begin{eqnarray*}
Z_{1} &=&\sup \limits_{L>0,L\in 
\mathbb{Z}
}2^{-L\lambda q_{1}}\sum \limits_{k=0}^{L}2^{kq_{1}\alpha _{\infty }}\left(
\sum \limits_{l=k}^{\infty }\left \vert \lambda _{l}\right \vert \left \Vert
\left( \left[ b,I^{\beta }\right] \left( a_{l}\right) \right) \chi
_{k}\right \Vert _{L^{p_{2}(\cdot )}}\right) ^{q_{1}} \\
&\lesssim &\sup \limits_{L>0,L\in 
\mathbb{Z}
}2^{-L\lambda q_{1}}\sum \limits_{k=0}^{L}2^{kq_{1}\alpha _{\infty }}\left(
\sum \limits_{l=k}^{\infty }\left \vert \lambda _{l}\right \vert
^{q_{1}}\left \Vert \left( \left[ b,I^{\beta }\right] \left( a_{l}\right)
\right) \chi _{k}\right \Vert _{L^{p_{2}(\cdot )}}^{\frac{q_{1}}{2}}\right)
\left( \sum \limits_{l=k}^{\infty }\left \Vert \left( \left[ b,I^{\beta }%
\right] \left( a_{l}\right) \right) \chi _{k}\right \Vert _{L^{p_{2}(\cdot
)}}^{\frac{q_{1}^{\prime }}{2}}\right) ^{\frac{q_{1}}{q_{1}^{\prime }}} \\
&\lesssim &\left \Vert b\right \Vert _{BMO}^{q_{1}}\sup \limits_{L>0,L\in 
\mathbb{Z}
}2^{-L\lambda q_{1}}\sum \limits_{k=0}^{L}2^{kq_{1}\alpha _{\infty }}\left(
\sum \limits_{l=k}^{\infty }\left \vert \lambda _{l}\right \vert
^{q_{1}}\left \Vert a_{l}\right \Vert _{L^{p_{1}(\cdot )}}^{\frac{q_{1}}{2}%
}\right) \left( \sum \limits_{l=k}^{\infty }\left \Vert a_{l}\right \Vert
_{L^{p_{1}(\cdot )}}^{\frac{q_{1}^{\prime }}{2}}\right) ^{\frac{q_{1}}{%
q_{1}^{\prime }}} \\
&\lesssim &\left \Vert b\right \Vert _{BMO}^{q_{1}}\sup \limits_{L>0,L\in 
\mathbb{Z}
}2^{-L\lambda q_{1}}\sum \limits_{k=0}^{L}2^{kq_{1}\alpha _{\infty }}\left(
\sum \limits_{l=k}^{\infty }\left \vert \lambda _{l}\right \vert
^{q_{1}}\left \vert B_{l}\right \vert ^{-\frac{\alpha _{l}q_{1}}{\left(
2n\right) }}\right) \left( \sum \limits_{l=k}^{\infty }\left \vert
B_{l}\right \vert ^{-\frac{\alpha _{l}q_{1}^{\prime }}{\left( 2n\right) }%
}\right) ^{\frac{q_{1}}{q_{1}^{\prime }}} \\
&\lesssim &\left \Vert b\right \Vert _{BMO}^{q_{1}}\sup \limits_{L>0,L\in 
\mathbb{Z}
}2^{-L\lambda q_{1}}\sum \limits_{k=0}^{L}2^{k\frac{q_{1}}{2}\alpha _{\infty
}}\left( \sum \limits_{l=k}^{\infty }\left \vert \lambda _{l}\right \vert
^{q_{1}}\left \vert B_{l}\right \vert ^{-\frac{\alpha _{l}q_{1}}{\left(
2n\right) }}\right) \\
&=&\left \Vert b\right \Vert _{BMO}^{q_{1}}\left( \sup \limits_{L>0,L\in 
\mathbb{Z}
}2^{-L\lambda q_{1}}\sum \limits_{l=0}^{L}\left \vert \lambda _{l}\right
\vert ^{q_{1}}\sum \limits_{k=0}^{l}2^{\left( k-l\right) \frac{q_{1}}{2}%
\alpha _{\infty }}+\sup \limits_{L>0,L\in 
\mathbb{Z}
}2^{-L\lambda q_{1}}\sum \limits_{l=L}^{\infty }\left \vert \lambda
_{l}\right \vert ^{q_{1}}\sum \limits_{k=0}^{L}2^{\left( k-l\right) \frac{%
q_{1}}{2}\alpha _{\infty }}\right) \\
&\lesssim &\left \Vert b\right \Vert _{BMO}^{q_{1}}\left( \sup
\limits_{L>0,L\in 
\mathbb{Z}
}2^{-L\lambda q_{1}}\sum \limits_{l=0}^{L}\left \vert \lambda _{l}\right
\vert ^{q_{1}}+\sup \limits_{L>0,L\in 
\mathbb{Z}
}\sum \limits_{l=L}^{\infty }2^{\left( l-L\right) q_{1}\lambda }2^{-l\lambda
q_{1}}\dsum \limits_{i=-\infty }^{l}\left \vert \lambda _{i}\right \vert
^{q_{1}}\sum \limits_{k=0}^{L}2^{\left( k-l\right) \frac{q_{1}}{2}\alpha
_{\infty }}\right) \\
&\lesssim &\left \Vert b\right \Vert _{BMO}^{q_{1}}\left( \Psi +\Psi \sup
\limits_{L>0,L\in 
\mathbb{Z}
}\sum \limits_{l=L}^{\infty }2^{\left( l-L\right) \lambda q_{1}}2^{\left(
L-l\right) \frac{q_{1}}{2}\alpha _{\infty }}\right) \\
&\lesssim &\left \Vert b\right \Vert _{BMO}^{q_{1}}\left( \Psi +\Psi \sup
\limits_{L>0,L\in 
\mathbb{Z}
}\sum \limits_{l=L}^{\infty }2^{\left( l-L\right) \lambda q_{1}\left(
\lambda -\frac{\alpha _{\infty }}{2}\right) }\right) \\
&\lesssim &\left \Vert b\right \Vert _{BMO}^{q_{1}}\Psi \text{ }\left(
\alpha _{\infty }>2\lambda \right) .
\end{eqnarray*}%
Finally, we consider the term $Z_{2}$. Now, let $\frac{1}{q_{1}}+\frac{1}{%
q_{1}^{\prime }}=1$. By (\ref{3}), (\ref{12}) and the conditions $\beta
-n\delta _{2}<\alpha \left( 0\right) $ and $\beta -n\delta _{2}<\alpha
_{\infty }$, we obtain that%
\begin{eqnarray*}
Z_{2} &=&\sup \limits_{L>0,L\in 
\mathbb{Z}
}2^{-L\lambda q_{1}}\sum \limits_{k=0}^{L}2^{kq_{1}\alpha _{\infty }}\left(
\sum \limits_{l=-\infty }^{k-1}\left \vert \lambda _{l}\right \vert \left
\Vert \left( \left[ b,I^{\beta }\right] \left( a_{l}\right) \right) \chi
_{k}\right \Vert _{L^{p_{2}(\cdot )}}\right) ^{q_{1}} \\
&\lesssim &\left \Vert b\right \Vert _{BMO}^{q_{1}}\sup \limits_{L>0,L\in 
\mathbb{Z}
}2^{-L\lambda q_{1}}\sum \limits_{k=0}^{L}2^{kq_{1}\alpha _{\infty }}\left(
\sum \limits_{l=-\infty }^{k-1}\left \vert \lambda _{l}\right \vert \left(
k-l\right) 2^{\left( \beta -n\delta _{2}\right) \left( k-l\right) -\alpha
_{l}l}\right) ^{q_{1}} \\
&\lesssim &\left \Vert b\right \Vert _{BMO}^{q_{1}}\sup \limits_{L>0,L\in 
\mathbb{Z}
}2^{-L\lambda q_{1}}\sum \limits_{k=0}^{L}2^{kq_{1}\alpha _{\infty }}\left(
\sum \limits_{l=-\infty }^{-1}\left \vert \lambda _{l}\right \vert \left(
k-l\right) 2^{\left( \beta -n\delta _{2}\right) \left( k-l\right) -\alpha
\left( 0\right) l}\right) ^{q_{1}}
\end{eqnarray*}%
\begin{eqnarray*}
&&+\left \Vert b\right \Vert _{BMO}^{q_{1}}\sup \limits_{L>0,L\in 
\mathbb{Z}
}2^{-L\lambda q_{1}}\sum \limits_{k=0}^{L}2^{kq_{1}\alpha _{\infty }}\left(
\sum \limits_{l=0}^{k-1}\left \vert \lambda _{l}\right \vert \left(
k-l\right) 2^{\left( \beta -n\delta _{2}\right) \left( k-l\right) -\alpha
_{\infty }l}\right) ^{q_{1}} \\
&\lesssim &\left \Vert b\right \Vert _{BMO}^{q_{1}}\sup \limits_{L>0,L\in 
\mathbb{Z}
}2^{-L\lambda q_{1}}\sum \limits_{k=0}^{L}2^{kq_{1}\left[ \alpha _{\infty
}-\left( \beta -n\delta _{2}\right) \right] }\left( \sum \limits_{l=-\infty
}^{-1}\left \vert \lambda _{l}\right \vert \left( k-l\right) 2^{l\left[
\left( \beta -n\delta _{2}\right) -\alpha \left( 0\right) \right] }\right)
^{q_{1}} \\
&&+\left \Vert b\right \Vert _{BMO}^{q_{1}}\sup \limits_{L>0,L\in 
\mathbb{Z}
}2^{-L\lambda q_{1}}\sum \limits_{k=0}^{L}\left( \sum
\limits_{l=0}^{k-1}\left \vert \lambda _{l}\right \vert \left( k-l\right)
2^{\left( \beta -n\delta _{2}-\alpha _{\infty }\right) \left( k-l\right)
}\right) ^{q_{1}} \\
&\lesssim &\left \Vert b\right \Vert _{BMO}^{q_{1}}\left( \sup
\limits_{L>0,L\in 
\mathbb{Z}
}2^{-L\lambda q_{1}}\sum \limits_{l=-\infty }^{-1}\left \vert \lambda
_{l}\right \vert ^{q_{1}}2^{\left[ \left( \beta -n\delta _{2}\right) -\alpha
\left( 0\right) \right] l\frac{q_{1}}{2}}\right) \left( \sum
\limits_{l=-\infty }^{-1}\left( k-l\right) ^{q_{1}}2^{\left[ \left( \beta
-n\delta _{2}\right) -\alpha \left( 0\right) \right] l\frac{q_{1}^{\prime }}{%
2}}\right) ^{\frac{q_{1}}{q_{1}^{\prime }}} \\
&&+\left \Vert b\right \Vert _{BMO}^{q_{1}}\sup \limits_{L>0,L\in 
\mathbb{Z}
}2^{-L\lambda q_{1}}\sum \limits_{k=0}^{L}\left( \sum
\limits_{l=0}^{k-1}\left \vert \lambda _{l}\right \vert ^{q_{1}}\left(
k-l\right) ^{q_{1}}2^{\left[ \left( \beta -n\delta _{2}-\alpha _{\infty
}\right) \left( k-l\right) \right] \frac{q_{1}}{2}}\right) \\
&&\times \left( \sum \limits_{l=0}^{k-1}2^{\left[ \left( \beta -n\delta
_{2}-\alpha _{\infty }\right) \left( k-l\right) \right] \frac{q_{1}^{\prime }%
}{2}}\right) ^{\frac{q_{1}}{q_{1}^{\prime }}} \\
&\lesssim &\left \Vert b\right \Vert _{BMO}^{q_{1}}\sup \limits_{L>0,L\in 
\mathbb{Z}
}2^{-L\lambda q_{1}}\sum \limits_{l=-\infty }^{-1}\left \vert \lambda
_{l}\right \vert ^{q_{1}}\left( k-l\right) ^{q_{1}}2^{\left[ \left( \beta
-n\delta _{2}\right) -\alpha \left( 0\right) \right] l\frac{q_{1}}{2}} \\
&&+\left \Vert b\right \Vert _{BMO}^{q_{1}}\sup \limits_{L>0,L\in 
\mathbb{Z}
}2^{-L\lambda q_{1}}\sum \limits_{k=0}^{L}\sum \limits_{l=0}^{k-1}\left
\vert \lambda _{l}\right \vert ^{q_{1}}\left( k-l\right) ^{q_{1}}2^{\left[
\left( \beta -n\delta _{2}-\alpha _{\infty }\right) \left( k-l\right) \right]
\frac{q_{1}}{2}} \\
&\lesssim &\left \Vert b\right \Vert _{BMO}^{q_{1}}\sup \limits_{L>0,L\in 
\mathbb{Z}
}2^{-L\lambda q_{1}}\sum \limits_{l=-\infty }^{-1}\left \vert \lambda
_{l}\right \vert ^{q_{1}} \\
&&+\left \Vert b\right \Vert _{BMO}^{q_{1}}\sup \limits_{L>0,L\in 
\mathbb{Z}
}2^{-L\lambda q_{1}}\sum \limits_{l=0}^{L-1}\left \vert \lambda _{l}\right
\vert ^{q_{1}}\sum \limits_{k=l+1}^{L}\left( k-l\right) ^{q_{1}}2^{\left[
\left( \beta -n\delta _{2}-\alpha _{\infty }\right) \left( k-l\right) \right]
\frac{q_{1}}{2}} \\
&\lesssim &\left \Vert b\right \Vert _{BMO}^{q_{1}}\left( \sup
\limits_{L>0,L\in 
\mathbb{Z}
}2^{-L\lambda q_{1}}\sum \limits_{l=-\infty }^{-1}\left \vert \lambda
_{l}\right \vert ^{q_{1}}+\sup \limits_{L>0,L\in 
\mathbb{Z}
}2^{-L\lambda q_{1}}\sum \limits_{l=0}^{L-1}\left \vert \lambda _{l}\right
\vert ^{q_{1}}\right) \\
&\lesssim &\left \Vert b\right \Vert _{BMO}^{q_{1}}\Psi .
\end{eqnarray*}%
Then, by joining the above inequalities for $X,Y$ and $Z$, we obtain (\ref%
{100}). Thus, the proof is completed.
\end{proof}

\end{document}